\providecommand{\norm}[1]{\left\lVert #1\right\rVert}
\providecommand{\abs}[1]{\left\lvert #1\right\rvert}
\providecommand{\R}{\mathbb{R}}
\providecommand{\ee}{e}
\DeclareMathOperator{\tr}{tr}
\DeclareMathOperator{\Skew}{Skew}
\DeclareMathOperator{\Sym}{Sym}
\DeclareMathOperator{\Null}{Null}
\let\div\diver
\newtheorem{thm}{Theorem}[section]
\newtheorem{cor}[thm]{Corollary}
\newtheorem{lem}[thm]{Lemma}
\theoremstyle{remark}
\newtheorem{rem}[thm]{Remark}
\theoremstyle{definition}
\newtheorem{dfn}[thm]{Definition}
\theoremstyle{remark}
\begin{document}

\title[Stokes flow]{Stokes flow with kinematic and dynamic boundary conditions}

\author[J.~Fabricius]{John Fabricius}
\address{%
Department of Engineering Sciences and Mathematics\\
Lule\aa\ University of Technology\\
SE-971 87 Lule\aa , Sweden}
\email{john.fabricius@ltu.se}

\keywords{Stokes equation, stress condition, traction condition, Neumann condition, pressure operator}

\subjclass[2010]{76D03,76D07}


\begin{abstract}
We review the first and second boundary value problems for the Stokes system posed in a bounded Lipschitz domain in $\R^n.$ Particular attention is given to the mixed boundary condition: a Dirichlet condition is imposed for the velocity on one part of the boundary while a Neumann condition for the stress tensor is imposed on the remaining part. Some minor modifications to the standard theory are therefore required. The most noteworthy result is that both pressure and velocity are unique.
\end{abstract}

\maketitle

\section{Introduction}

Let $\Omega$ be a bounded domain in $\R^n$ ($n\geq 2$), whose boundary $\partial \Omega,$ with outward unit normal $\hat{n},$ is divided into two disjoint parts: $\Gamma_D$ and $\Gamma_N.$ The present work concerns the solvability of the boundary value problem (b.v.p.)
\begin{equation}\label{Stokes:bvp}
\left\{
\begin{aligned}
 \div \sigma& =f && \text{in }\Omega && \text{(1a)}\\
 \div{u}& =0 && \text{in }\Omega && \text{(1b)}\\
  \sigma\hat{n}& =g && \text{on }\Gamma_N && \text{(1c)}\\
u& =h && \text{on }\Gamma_D && \text{(1d)}.
\end{aligned}\right.
\end{equation}
The symbol $\sigma=(\sigma_{ij})_{ij}$ denotes the Cauchy stress tensor which is defined by the constitutive law
\begin{equation}\label{S:rel}
\sigma_{ij}=-p\,\delta_{ij}+\mu\left(\frac{\partial u_i}{\partial x_j}+\frac{\partial u_j}{\partial x_i}\right)
\quad (1\leq i,j\leq n),
\end{equation}
where the dynamic viscosity $\mu>0$ is a given constant. Under this assumption, (\ref{Stokes:bvp}a--b) becomes the Stokes system which describes the steady flow of an incompressible Newtonian fluid when inertial forces can be neglected. Moreover, we assume that $f$ (body forces), $g$ (surface forces) and $h$ (surface velocity) are given vector functions and we seek the velocity $u$ (vector function) and the pressure $p$ (scalar function) of the fluid.

The b.v.p. \eqref{Stokes:bvp} when $\Gamma_D=\partial \Omega$ and $\Gamma_N=\varnothing$ is classical and has been studied by many authors. The present work concerns the mixed boundary condition (\ref{Stokes:bvp}c-d). The Dirichlet condition (\ref{Stokes:bvp}d) is a \emph{kinematic} condition that prescribes the velocities of fluid particles at each point of the surface $\Gamma_D.$ It is natural to impose this condition if $\Gamma_D$ is the contact surface between a fluid and a body moving with a prescribed velocity, provided that all components of the velocity vector are continuous across the interface. This so called no-slip condition is valid at solid boundaries but also at the interface between two similar fluids \cite[pp. 12--14]{Pan96}. We do not assume that $\Gamma_D$ is impermeable, so $h$ may have a normal component --- fluid particles may enter or leave the domain through any part of the boundary as long as (\ref{Stokes:bvp}b) is fulfilled. The Neumann condition (\ref{Stokes:bvp}c), on the other hand, is a \emph{dynamic} condition that prescribes the stress vector (traction) at each point of $\Gamma_N.$ In physical terms, it means that the fluid is subjected to an externally applied surface force as a result of the mechanical contact with a surrounding body. The dynamic condition is applicable whenever the stress vector is continuous across a surface. In particular, this is true when $\Gamma_N$ is the interface between two fluids (not necessarily of the same constitutive type), see \cite[Art.~327]{Lamb32} or \cite[p.~240]{Ser59}. For a more general discussion of fluid-fluid interfaces we refer to Chapter 5 of \cite{Pan96}.

The kinematic condition is treated in all classical texts devoted to the mathematical study of fluids \cite{Galdi11,Lad69,Lions69,Tem84}. But there are only a few works related to the dynamic condition in the mathematical literature. In the related field of elasticity theory, equal attention is given to both conditions (see e.g. \cite{Duv76,Mar83}). A strong argument for considering the mixed condition for fluids comes from the solvability of \eqref{Stokes:bvp} alone. Under fairly general assumptions on $f, g$ and $h$ it can be shown that the b.v.p. \eqref{Stokes:bvp} has a weak solution:
\begin{enumerate}
\item If $\int_{\Gamma_N}dS=0$ and $\int_{\partial \Omega} h\cdot \hat{n}\,dS=0,$ there exists a solution $(u,p)$ of \eqref{Stokes:bvp}. The velocity is unique but the pressure is only unique up to a constant.
\item If $\int_{\Gamma_D}dS=0$ and $f$ is ``compatible'' with $g,$ there exists a solution $(u,p)$ of \eqref{Stokes:bvp}. The pressure is unique but the velocity is only unique up to the motion of a rigid body.
\item If $\int_{\Gamma_D}dS>0$ and $\int_{\Gamma_N}dS>0$, there exists a solution $(u,p)$ of \eqref{Stokes:bvp} for ``arbitrary'' data $f,$ $g$ and $h.$ Both the velocity and the pressure are unique.
\end{enumerate}
This result has practical implications. In many applications involving viscous flows, e.g. in lubrication theory, the main problem is to calculate the net force exerted by the fluid on a solid boundary. For such problems the kinematic condition is obviously deficient, inasmuch as the dynamic state of the fluid cannot be completely determined. A remedy is to impose a dynamic condition on some part of the boundary. On a more general note, if one believes that the Stokes system (\ref{Stokes:bvp}a--b) is a good model for the creeping motion of a viscous fluid in the real world, where both velocity and stress are observable quantities, the mixed condition seems like the most realistic alternative.

A precise formulation of the solvability of \eqref{Stokes:bvp} when $\Omega$ is a bounded domain in $\R^n$ and $\partial \Omega$ is of class $C^{0,1},$  is the principal result of the present work. For the Dirichlet problem, one deduces the existence of a pressure function from De Rham's theorem \cite{Tar78,Tar06}. The mixed boundary condition, however, requires a \emph{stronger} formulation of this result. Nevertheless, the dynamic condition has been studied by some authors. The first occurrences that we have found are in the works of Glowinski \cite{Glo84,Glo03} and Pironneau \cite{Pir89}. An existence and uniqueness result for a generalized version of \eqref{Stokes:bvp}, when $\partial \Omega$ is smooth, is proved in Chapter~IV of \cite{Glo03}. For the existence of a pressure function, the author introduces a so called ``Stokes operator'' which maps $L^2(\Omega)$ into $L^2(\Omega).$ Our approach differs, in that we define the pressure operator in terms of the Bogovski\u{\i} operator, which is a well established concept in the mainstream literature. This gives a more precise estimate for the $L^2$-norm of the pressure, but Glowinski's approach could be more suitable for numerical computations. The solvability of \eqref{Stokes:bvp} when $n=3$ and $\Gamma_N=\partial \Omega$ is of class $C^{1,1}$ is studied by Boyer and Fabrie in Chapter~IV of \cite{Boy13} (without references to previous work). The authors observe that the pressure is unique, but do not consider the mixed boundary condition. Maz'ya and Rossmann \cite{Maz07,Maz09} have studied the stationary Stokes system in a three-dimensional domain of polyhedral type with one of four different boundary conditions, including (\ref{Stokes:bvp}c--d), imposed on each face. They are mainly concerned with regularity assertions of weak solutions, but existence and uniqueness of the mixed boundary value problem for the Stokes system is proved in Theorem~5.1 of \cite{Maz07}. The present paper extends the aforementioned results to bounded Lipschitz domains in $\R^n.$ Moreover we clarify the notion of a pressure operator.

The paper is structured as follows. In Section~\ref{prel:sec} we define the fluid domain as well as the relevant function spaces in order to formulate a notion of weak solution for the mixed b.v.p. \eqref{Stokes:bvp}. The main result, Theorem~\ref{main:thm}, is stated in Section~\ref{mainres:sec}, but the proof is postponed until Section~\ref{proof:sec}. Section~\ref{ineq:sec} is devoted to inequalities. We state three versions of Korn's inequality, which is closely related to Ne\v{c}as' inequality. The third Korn inequality (Theorem~\ref{Korn3:thm}) is a key result. So is the pressure operator described in Section~\ref{vector_analysis:sec}, which follows from a theorem due to Bogovski\u{\i}. We conclude this paper with a simple application of Theorem~\ref{main:thm} to ``pressure-driven'' flow in a channel bounded by two parallell plates. We compute the solution of this problem using a finite element program and discuss its relation to the classical Poiseuille solution.

\section{Preliminaries and notation}\label{prel:sec}

\subsection{Euclidian structure}

Let $\R^{m\times n}$ denote the set of real $m\times n$ matrices $X=(x_{ij})_{ij}$ equipped with the Euclidian scalar product
\begin{equation*}
X:Y=\tr (X^TY)=\sum_{j=1}^n \sum_{i=1}^m x_{ij}y_{ij},\quad \abs{X}=(X:X)^{1/2}
\end{equation*}
where $X^T=(x_{ji})_{ij}$ in $\R^{n\times m}$ denotes the transpose of $X.$  For $X\in \R^{n\times n},$ the symmetric and skew-symmetric parts of $X$ are defined respectively as
\begin{equation*}
\ee(X)=\frac{1}{2}\left( X+X^T\right),\quad \omega(X)=\frac{1}{2}\left( X-X^T\right).
\end{equation*}
The subspaces of symmetric and skew-symmetric matrices are defined as
\begin{align*}
\Sym(n)& =\Null \omega=\left\{X\in \R^{n\times n}: \omega(X)=0\right\}\\
\Skew(n)& =\Null \ee=\left\{X\in \R^{n\times n}: \ee(X)=0\right\}.
\end{align*}
The identity element in $\R^{n\times n}$ is denoted as $I=(\delta_{ij})_{ij}.$ 
We identify $\R^n$ with $\R^{n\times 1}$ (column vectors) and denote the scalar product in $\R^n$ as
\begin{equation*}
x\cdot y=x^Ty=\sum_{i=1}^n x_iy_i,\quad \abs{x}=(x\cdot x)^{1/2}.
\end{equation*}
The standard basis vectors in $\R^n$ are denoted as $e_1,\dotsc, e_n.$

\subsection{Fluid domain}
The fluid domain $\Omega$ is assumed to be an open bounded connected subset of $\R^n$ with a Lipschitz boundary $\partial \Omega.$ By the latter condition we mean that there exists a finite collection of open sets $\{U_i\},$ to each member of which there corresponds a rotation $R_i$ and a function $f_i\in C^{0,1}(\R^{n-1})$ such that
\begin{equation*}
\partial \Omega\subset \bigcup_i U_i\quad
\text{and}\quad
U_i\cap \Omega=U_i\cap R_iH_{f_i},
\end{equation*}
where
\begin{equation*}
H_{f_i}=\left\{y\in \R^n: y_n>f_i(y_1,\dotsc,y_{n-1})\right\}.
\end{equation*}
Consequently, the outward unit normal $\hat{n}$ is defined almost everywhere on $\partial \Omega$ and the divergence theorem holds, i.e.
\begin{equation}\label{div:thm}
\int_{\partial \Omega} v\cdot \hat{n}\,dS=\int_{\Omega} \div v\,dx\quad \forall v\in C^1(\R^n;\R^n),
\end{equation}
where $dS$ denotes surface measure on $\partial \Omega.$  $\Gamma_D$ and $\Gamma_N$ are assumed to be $dS$-measurable disjoint sets such that $\partial \Omega=\Gamma_D\cup \Gamma_N.$ Further regularity assumptions on $\Gamma_D$ and $\Gamma_N$ are imposed below.

\subsection{Function spaces}

The dual of a Banach space  $V$ is denoted as $V'$ and the symbol $\langle L,v\rangle$  stands for the evaluation of $L\in V'$ at $v\in V.$ If $Y$ is subspace of $V,$ the quotient space $V/Y$ is defined in the usual way and $\{v\}$ denotes the equivalence class in $V/Y$ to which $v\in V$ belongs.

We shall work mainly with the following spaces
\begin{align*}
W(\Omega)& =\bigl\{v\in H^1(\Omega;\R^n): v=0\text{ on }\Gamma_D\bigr\}\\
V(\Omega)& =\bigl\{v\in W(\Omega): \div v=0\text{ in }\Omega\bigr\}.
\end{align*}
Clearly $V(\Omega)\subset W(\Omega)$ are closed subspaces of $H^1(\Omega;\R^n).$ Moreover, $W/V(\Omega)$ is defined as the quotient of $W(\Omega)$ by $V(\Omega),$ equipped with the norm
\[
\norm{\{u\}}_{W/V(\Omega)}=\inf_{v\in V(\Omega)} \norm{u-v}_{H^1(\Omega)}.
\]

By definition, $H^{1/2}(\Gamma_D;\R^n)$ consists of all elements in $H^{1/2}(\partial\Omega;\R^n)$ restricted to $\Gamma_D.$ On $H^{1/2}(\Gamma_D;\R^n)$ we choose the norm
\begin{equation*}
\norm{u}_{H^{1/2}(\Gamma_D)}=\norm{\{u\}}_{H^1/W(\Omega)}=\inf_{v\in W(\Omega)} \norm{u-v}_{H^1(\Omega)}.
\end{equation*}
Moreover we define the space $H_0^{1/2}(\Gamma_N;\R^n)$ as the image of $W(\Omega)$ under the trace operator: $H^1(\Omega;\R^n)\to H^{1/2}(\partial \Omega;\R^n).$ $H_0^{1/2}(\Gamma_N;\R^n)$ is equipped with the norm of $W/H_0^1(\Omega;\R^n).$ The dual of $H_0^{1/2}(\Gamma_N;\R^n)$ is denoted as $H^{-1/2}(\Gamma_N;\R^n).$ The following  characterization will prove to be useful: For any $L\in H^{-1/2}(\Gamma_N;\R^n)$ there exists $G\in L^2(\Omega;\R^{n\times n})$ with $\div G\in L^2(\Omega;\R^n)$ such that
\begin{equation}\label{GN:dual}
\langle L,v\rangle_{H^{-1/2}(\Gamma_N),H_0^{1/2}(\Gamma_N)}=\int_\Omega G:\nabla v+(\div G)\cdot v\,dx \quad \forall v\in W(\Omega). 
\end{equation}
By analogy with the divergence theorem we say that $L=G\hat{n}$ on $\Gamma_N,$ whenever \eqref{GN:dual} holds.
Moreover
\begin{equation}
\norm{L}_{H^{-1/2}(\Gamma_N)}=\inf \left(\int_\Omega \abs{G}^2+\abs{\div G}^2\,dx\right)^{1/2}
\end{equation}
where the infimum is taken over all $G$ satisfying \eqref{GN:dual}.

We denote the space of rigid body velocities as
\begin{equation*}
R(\Omega)=\bigl\{v\in H^1(\Omega;\R^n):\ee(\nabla v)=0 \text{ in } \Omega\bigr\}.
\end{equation*}
Since $\Omega$ is connected, each element $v$ in $R(\Omega)$ can be represented as $v(x)=Ax+b,$ for some $A\in \Skew(n)$ and $b\in \R^n.$

\subsection{Weak formulation}

In view of the notation introduced above, we shall subsequently write the Newton--Stokes law \eqref{S:rel} as $\sigma=-p\,I+2\mu\,\ee(\nabla u).$

\begin{dfn}
Assume $f\in L^2(\Omega; \R^n),$ $g\in L^2(\Gamma_N;\R^n)$ and $h\in H^{1/2}(\Gamma_D;\R^n).$  We say that $u\in H^1(\Omega;\R^n)$ and $p\in L^2(\Omega)$ are a weak solution of \eqref{Stokes:bvp} if $\div u=0$ in $\Omega,$ $u=h$ on $\Gamma_D$ and
\begin{equation}\label{Stokes:weak}
\int_{\Gamma_N} g\cdot v\,dS=\int_\Omega \bigl(-p\, I+2\mu\,\ee(\nabla u)\bigr):\nabla v\,dx+\int_\Omega f\cdot v\,dx
\end{equation}
for all $v$ in $W(\Omega).$ 
\end{dfn}

\begin{proof}[Derivation of weak formulation]
By formally applying the divergence theorem we obtain
\begin{equation}
\begin{split}
\int_{\partial \Omega} \sigma\hat{n}\cdot v\,dS
& =\int_\Omega \div (\sigma^Tv)\,dx\\
& =\int_\Omega \sigma:\nabla v+(\div \sigma)\cdot v\,dx.
\end{split}
\end{equation}
Taking into account (\ref{Stokes:bvp}a,c) and $v=0$ on $\Gamma_D$ gives \eqref{Stokes:weak}.
\end{proof}

In the weak formulation \eqref{Stokes:weak}, the sets $\Gamma_D$ and $\Gamma_N$ are defined only up to a set of measure zero. It follows that the extreme case $\int_{\Gamma_N}dS=0$ is equivalent to $\Gamma_D=\partial \Omega$ and $\Gamma_N=\varnothing,$ as $W(\Omega)$ then coincides with $H_0^1(\Omega;\R^n).$ Similarly $\int_{\Gamma_D}dS=0$ is equivalent to $\Gamma_D=\varnothing$ and $\Gamma_N=\partial\Omega,$ as $W(\Omega)$ then coincides with $H^1(\Omega;\R^n).$

\begin{rem}
Note that it is possible to take $g$ in the larger space $H^{-1/2}(\Gamma_N;\R^n)$ provided that one replaces the surface integral of \eqref{Stokes:weak} with
\begin{equation*}
\langle g,v\rangle_{H^{-1/2}(\Gamma_N),H_0^{1/2}(\Gamma_N)}.
\end{equation*}
Similarly, one can take $f$ in $W'(\Omega)$ by replacing the second volume integral with the more general expression
\begin{equation*}
\langle f,v\rangle_{W'(\Omega),W(\Omega)}.
\end{equation*}
\end{rem}

\section{Main result}\label{mainres:sec}

We state here the main result concerning the solvability of the b.v.p. \eqref{Stokes:bvp}. As mentioned in the introduction, we are mainly concerned with part (iii).

\begin{thm}\label{main:thm}
Let $\Gamma_D$ be a closed subset of $\partial \Omega$ and define $\Gamma_N$ as the complement of $\Gamma_D$ in $\partial \Omega.$
Let $f\in L^2(\Omega; \R^n),$ $g\in L^2(\Gamma_N;\R^n)$ and $h\in H^{1/2}(\Gamma_D;\R^n)$ be given functions. 
\begin{enumerate}
\item Assume $\Gamma_D=\partial \Omega$ and
\begin{equation}\label{h:comp}
\int_{\partial \Omega} h\cdot \hat{n}\,dS=0.
\end{equation}
Then there exists a unique weak solution
\begin{equation*}
u\in H^1(\Omega; \R^n),\quad \{p\}\in L^2(\Omega)/\R
\end{equation*}
of \eqref{Stokes:bvp}
such that
\begin{equation*}
\mu\norm{u}_{H^1(\Omega)}+\norm{\{p\}}_{L^2(\Omega)/\R}\leq {C}\bigl(\norm{f}_{L^2(\Omega)}+\norm{h}_{H^{1/2}(\partial\Omega)}\bigr),
\end{equation*}
where the constant $C$ depends only on $\Omega.$
\item Assume $\Gamma_D=\varnothing$ and
\begin{equation}\label{g:comp}
\int_{\partial \Omega} g\cdot v\,dS=\int_\Omega f\cdot v\,dx\quad \forall v\in R(\Omega).
\end{equation}
Then there exists a unique weak solution
\begin{equation*}
\{u\}\in H^1(\Omega; \R^n)/R(\Omega),\quad p\in L^2(\Omega)
\end{equation*}
of \eqref{Stokes:bvp} such that
\begin{equation*}
\mu\norm{\{u\}}_{H^1/R(\Omega)}+\norm{p}_{L^2(\Omega)}\leq {C}\bigl(\norm{f}_{L^2(\Omega)}+\norm{g}_{H^{-1/2}(\partial\Omega)}\bigr),
\end{equation*}
where the constant $C$ depends only on $\Omega.$
\item Assume $\int_{\Gamma_D}dS>0$ and $\int_{\Gamma_N}dS>0.$
Then there exists a unique weak solution
\begin{equation*}
u\in H^1(\Omega; \R^n),\quad p\in L^2(\Omega)
\end{equation*}
of \eqref{Stokes:bvp} such that
\begin{equation*}
\mu\norm{u}_{H^1(\Omega)}+\norm{p}_{L^2(\Omega)}\leq {C}\bigl(\norm{f}_{L^2(\Omega)}+\norm{g}_{H^{-1/2}(\Gamma_N)}+\norm{h}_{H^{1/2}(\Gamma_D)}\bigr),
\end{equation*}
where the constant $C$ depends only on $\Omega$ and $\Gamma_D.$
\end{enumerate}
\end{thm}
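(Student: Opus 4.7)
The plan is to treat the three cases in a unified Leray--Hopf fashion: first lift the Dirichlet data (when present) to reduce to a homogeneous problem; then solve for the velocity on a divergence-free subspace by Lax--Milgram, with coercivity supplied by whichever version of Korn's inequality is appropriate; and finally recover the pressure via a De Rham/Bogovski\u{\i} argument, reading off uniqueness from the range of the divergence operator on the relevant test space.

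For parts (i) and (iii) I would first construct a lifting $\tilde u\in H^1(\Omega;\R^n)$ with $\tilde u=h$ on $\Gamma_D$ and $\div\tilde u=0$ in $\Omega$. In case (i) this is classical: extend $h$ to $\partial\Omega$ (the flux condition \eqref{h:comp} makes the global flux vanish) and correct the divergence with Bogovski\u{\i}'s operator on $H_0^1$. In case (iii) I would extend $h$ to some $\tilde u_0\in H^1(\Omega;\R^n)$ and then subtract an element $w_0\in W$ with $\div w_0=\div\tilde u_0$; the existence of such $w_0$ hinges on the surjectivity of $\div\colon W\to L^2(\Omega)$, which is the key input from Section~\ref{vector_analysis:sec} and relies on $|\Gamma_N|>0$. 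Writing $w=u-\tilde u$, the problem reduces to finding $w\in V$ (respectively $V_0$, or a class in $V/R$ in (ii), where no lifting is needed) such that
\[
2\mu\int_\Omega \ee(\nabla w):\ee(\nabla v)\,dx = \int_{\Gamma_N} g\cdot v\,dS - \int_\Omega f\cdot v\,dx - 2\mu\int_\Omega \ee(\nabla\tilde u):\ee(\nabla v)\,dx
\]
for every admissible $v$.

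Coercivity of $(w,v)\mapsto 2\mu\int_\Omega \ee(\nabla w):\ee(\nabla v)\,dx$ comes from Korn's inequality in its three guises: the first Korn inequality on $V_0$ in case (i); the second on $V/R$ in case (ii), with the compatibility \eqref{g:comp} ensuring that the right-hand side descends to the quotient; and the third Korn inequality (Theorem~\ref{Korn3:thm}) on $V$ in case (iii), where $|\Gamma_D|>0$ kills the rigid motions. Lax--Milgram then delivers $w$ (unique modulo $R$ in (ii)) with an estimate of the asserted form.

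It remains to recover the pressure. In each case the residual linear form
\[
\ell(v) = \int_{\Gamma_N} g\cdot v\,dS - \int_\Omega f\cdot v\,dx - 2\mu\int_\Omega \ee(\nabla u):\nabla v\,dx
\]
vanishes on the divergence-free subspace by construction, so a De Rham-type result furnishes $p\in L^2(\Omega)$ with $\ell(v)=-\int_\Omega p\,\div v\,dx$. For (i) the classical De Rham theorem pins $p$ down only modulo constants, which is sharp. For (ii) and (iii) the Strong De Rham Theorem (Corollary~\ref{dRm2:thm}) applies on $H^1(\Omega;\R^n)$ and on $W$ respectively, and yields a unique $p\in L^2(\Omega)$. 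I expect the main obstacle, and the decisive new ingredient in the mixed case (iii), to be exactly this surjectivity $\div\colon W\to L^2(\Omega)$: one must produce, for every $q\in L^2(\Omega)$, a $v\in W$ with $\div v=q$. It enters twice (in the lifting and in the uniqueness of $p$), and its proof is where $|\Gamma_N|>0$ and the Bogovski\u{\i}-type machinery of Section~\ref{vector_analysis:sec} do the essential work.
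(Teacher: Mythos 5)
Your proposal is correct and follows essentially the same route as the paper: coercivity of the symmetrized-gradient form via the third Korn inequality, Lax--Milgram on the divergence-free subspace, a divergence-free lift of $h$ built from the surjectivity of $\div\colon W\to L^2(\Omega)$ (Theorem~\ref{range_div:thm}), and recovery plus uniqueness of the pressure from the Strong De Rham Theorem. The only difference is organizational --- the paper superposes three subproblems (one each for $f$, $g$, $h$) and sharpens the pressure bound by testing with $v\in J$ satisfying $\div v=p$, whereas you lift first and solve a single homogeneous problem --- but this does not change the substance of the argument.
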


\begin{rem}
The regularity hypothesis in part (iii) of Theorem~\ref{main:thm} that $\Gamma_D$ be closed (with $\Gamma_N$ relatively open) deserves an explanation. Clearly it is not a necessary condition. The main restriction comes from Lemma~\ref{psi0:lem}, which assumes that $\Gamma_N$ have an interior point. Nevertheless $\Gamma_D$ can be quite irregular, e.g. nowhere dense in $\partial \Omega.$
\end{rem}

\begin{rem}
Note that part (i) of Theorem~\ref{main:thm} holds only if $h$ is compatible with $\div u$ through \eqref{h:comp}. Similarly, part (ii) holds only if $g$ is compatible with $\div \sigma$ through \eqref{g:comp}.  In contrast, part (iii) requires no such compatibility condition. 
\end{rem}

\section{Inequalities}\label{ineq:sec}

A fundamental result in analysis is Ne\v{c}as' inequality, see \cite{Nec66} or \cite[Lemma~7.1]{Nec12}. Consider the space
\begin{equation*}
X(\Omega)=\left\{T\in H^{-1}(\Omega):\nabla T\in H^{-1}(\Omega;\R^n)\right\}.
\end{equation*}
Clearly $L^2(\Omega)\subset X(\Omega),$ but the stronger assertion $X(\Omega)=L^2(\Omega)$ requires some regularity of  $\partial \Omega.$ In particular it is true if $\Omega$ has a compact Lipschitz boundary, which is the present case.

\begin{thm}[Ne\v{c}as' inequality]
There exists a constant $C$ depending only on $\Omega$ such that
\begin{equation*}\label{Nec:ineq}
\norm{p}_{L^2(\Omega)}\leq C\bigl(\norm{p}_{H^{-1}(\Omega)}+\norm{\nabla p}_{H^{-1}(\Omega)}\bigr)\quad \forall p\in L^2(\Omega)
\end{equation*}
and $X(\Omega)=L^2(\Omega).$
\end{thm}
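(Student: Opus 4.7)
The plan is to prove the inequality first for $p \in L^2(\Omega)$ and then to deduce $X(\Omega) = L^2(\Omega)$ by a completion argument. The core estimate would be obtained by reducing to the whole-space case via a Lipschitz atlas and a partition of unity.

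On $\R^n$ the identity is immediate from Fourier analysis:
$$\norm{p}_{H^{-1}(\R^n)}^2 + \norm{\nabla p}_{H^{-1}(\R^n)}^2 = \int_{\R^n} \frac{1+\abs{\xi}^2}{1+\abs{\xi}^2}\abs{\hat p(\xi)}^2\,d\xi = \norm{p}_{L^2(\R^n)}^2,$$
using the Bessel-potential definition of $H^s(\R^n)$. To pass from $\R^n$ to a Lipschitz $\Omega$, I would fix a finite open cover $\{U_i\}$ of $\overline{\Omega}$ such that each $U_i$ is either compactly contained in $\Omega$ or corresponds to a single Lipschitz chart from Section~\ref{prel:sec}, together with a subordinate partition of unity $\{\psi_i\}$. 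Multiplication by $\psi_i$ is bounded on $H^{-1}(\Omega)$, and, by the product rule, $\norm{\nabla(\psi_i p)}_{H^{-1}(\Omega)} \leq C\bigl(\norm{\nabla p}_{H^{-1}(\Omega)} + \norm{p}_{H^{-1}(\Omega)}\bigr)$. For an interior patch, one extends $\psi_i p$ by zero to $\R^n$ and invokes the Fourier identity. For a boundary patch, one composes with the Lipschitz chart to straighten the boundary, and then extends the pushforward from the half-space to $\R^n$ (say by even reflection followed by a smooth cut-off) before applying the Fourier identity.

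The main obstacle I anticipate is book-keeping of norm equivalences through the Lipschitz change of coordinates and the extension across the flattened boundary. For $L^2$ and $H^1$ norms the bounds are standard, but $H^{-1}$ is more delicate because it is a dual space and arbitrary extension operators need not be bounded in negative regularity; the key point is that a bi-Lipschitz change of variables is bounded on $H^1_0$ of the relevant open sets, and therefore by duality on $H^{-1}$, with constants depending only on the Lipschitz constants of the chart. A parallel estimate is needed for the $H^{-1}$-norm of the gradient, which one obtains from the chain rule together with the fact that the Jacobian factors are in $L^\infty$.

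Once the inequality is established for all $p \in L^2(\Omega)$, the identification $X(\Omega) = L^2(\Omega)$ follows by approximation: given $T \in X(\Omega)$, one produces a sequence $T_n \in L^2(\Omega)$ with $T_n \to T$ in $X(\Omega)$ (by local flattening, extension across the boundary, mollification, and restriction), and then the inequality applied to $T_n - T_m$ forces $\{T_n\}$ to be Cauchy in $L^2(\Omega)$, whose limit must coincide with $T$ as a distribution on $\Omega$.
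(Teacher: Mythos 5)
The paper itself offers no proof of this theorem --- it is quoted from Ne\v{c}as \cite{Nec66,Nec12} --- so your attempt has to be measured against the standard arguments. Your architecture (whole-space Fourier identity, partition of unity, interior versus boundary patches) is the classical one and does work for $C^1$ domains, but for a \emph{Lipschitz} domain the step you file under ``book-keeping'' is exactly where it breaks. After straightening the boundary with a bi-Lipschitz chart $\Phi$, the chain rule gives $\partial_j(p\circ\Phi)=\sum_k(\partial_j\Phi_k)\,\bigl[(\partial_k p)\circ\Phi\bigr]$, and the entries $\partial_j\Phi_k$ are only in $L^\infty$. Multiplication by an $L^\infty$ function is \emph{not} bounded on $H^{-1}$: its adjoint maps $H_0^1$ only into $L^2$, not into $H_0^1$. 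So while composition $p\mapsto p\circ\Phi$ and multiplication by the smooth cut-offs $\psi_i$ are indeed bounded on $H^{-1}$ by the duality argument you give, you cannot bound $\norm{\nabla(p\circ\Phi)}_{H^{-1}}$ by $\norm{\nabla p}_{H^{-1}}+\norm{p}_{H^{-1}}$ via the chain rule; the phrase ``the Jacobian factors are in $L^\infty$'' conceals precisely the failure point. A secondary instance of the same problem occurs in the even reflection: for a tangential derivative $\partial_j(Ep)$ the natural test function $v(x',x_n)+v(x',-x_n)$ does not vanish on the flattened boundary, so the pairing is not controlled by the $H^{-1}$-norm of $\partial_j p$ as a functional on $H_0^1$ of the half-space. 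The proposed mollification argument for $X(\Omega)=L^2(\Omega)$ relies on the same flattening-plus-extension step and inherits the gap.

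This is exactly why the Lipschitz case is harder than the $C^1$ case and why the proofs in the literature avoid flattening. Ne\v{c}as' own argument for $C^{0,1}$ boundaries works directly on the graph domain using one-sided difference quotients in directions lying inside the cone guaranteed by the Lipschitz condition (translate into the domain, mollify, estimate tangential derivatives first, then recover the remaining one). Alternatively, one proves the equivalent dual statement --- surjectivity of $\div\colon H_0^1(\Omega;\R^n)\to\{q\in L^2(\Omega):\int_\Omega q\,dx=0\}$, as in Corollary~\ref{range_div:cor} --- by the explicit Bogovski{\u\i} integral operator on domains star-shaped with respect to a ball, decomposing the Lipschitz domain into finitely many such pieces. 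Your boundary-patch step would have to be replaced by one of these devices; as written, the proof is not salvageable by more careful book-keeping.
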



Several other important results and inequalities stated below are easily deduced from Ne\v{c}as' inequality. Among them the following (see Tartar \cite{Tar78,Tar06} for details).

\begin{cor}\label{range_grad:cor}
The range of $\nabla\colon L^2(\Omega)\to H^{-1}(\Omega;\R^n)$ is equal to
\begin{equation*}
(\Null\div)^\perp=\left\{T\in H^{-1}(\Omega;\R^n):\langle T,v\rangle=0\quad \forall v\in V_0\right\}
\end{equation*}
\end{cor}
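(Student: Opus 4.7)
The plan is to establish both inclusions; the first is elementary, while the second I would obtain from the closed range theorem once closedness of $\Range(\nabla)$ has been deduced from Ne\v{c}as' inequality.

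The inclusion $\Range(\nabla) \subseteq (\Null\div)^\perp$ is just integration by parts: if $T = \nabla p$ with $p \in L^2(\Omega)$, then for every $v \in V_0 \subset H_0^1(\Omega;\R^n)$,
\begin{equation*}
\langle \nabla p, v\rangle_{H^{-1}, H_0^1} = -\int_\Omega p\,\div v\,dx = 0,
\end{equation*}
so $\nabla p$ annihilates $\Null\div = V_0$. For the reverse inclusion I would invoke the closed range theorem. Viewing $H^{-1}(\Omega;\R^n)$ as $H_0^1(\Omega;\R^n)'$, the computation above shows that the transpose of $\nabla\colon L^2(\Omega)\to H^{-1}(\Omega;\R^n)$ is $-\div\colon H_0^1(\Omega;\R^n)\to L^2(\Omega)$, whose kernel is precisely $V_0$. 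Hence once I know that $\Range(\nabla)$ is closed, the closed range theorem gives $\Range(\nabla)=V_0^\perp=(\Null\div)^\perp$, as required.

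The crux is therefore to show closedness of $\Range(\nabla)$. For this I would upgrade Ne\v{c}as' inequality to the Poincar\'e-type estimate
\begin{equation*}
\norm{p-\bar p}_{L^2(\Omega)} \leq C \norm{\nabla p}_{H^{-1}(\Omega;\R^n)} \quad \forall p\in L^2(\Omega),
\end{equation*}
where $\bar p$ denotes the mean value of $p$ over $\Omega$. This bound expresses that $\nabla\colon L^2(\Omega)/\R\to H^{-1}(\Omega;\R^n)$ is bounded below, and hence has closed range, so $\Range(\nabla)$ is closed in $H^{-1}(\Omega;\R^n)$. The upgrade itself is a standard compactness contradiction: were it to fail, there would exist mean-zero $p_n\in L^2(\Omega)$ with $\norm{p_n}_{L^2}=1$ but $\norm{\nabla p_n}_{H^{-1}}\to 0$; the compact embedding $L^2(\Omega)\hookrightarrow H^{-1}(\Omega)$ yields a subsequence convergent in $H^{-1}$, and then Ne\v{c}as' inequality forces it to be Cauchy in $L^2$, with limit $p$ satisfying $\bar p=0$, $\norm{p}_{L^2}=1$, and $\nabla p=0$, which is impossible since $\Omega$ is connected.

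The main obstacle is precisely this upgrade from Ne\v{c}as' inequality to the Poincar\'e form; everything else is formal. The compactness argument is routine given Rellich--Kondrachov and the connectedness of $\Omega$, so the proof reduces to applying the closed range theorem to the pair $(\nabla,-\div)$.
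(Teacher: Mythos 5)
Your argument is correct and is essentially the deduction the paper has in mind: it gives no proof of Corollary~\ref{range_grad:cor} beyond noting that it is ``easily deduced from Ne\v{c}as' inequality'' and citing Tartar, and the standard route there is exactly yours --- the easy inclusion by integration by parts, the Poincar\'e-type upgrade of Ne\v{c}as' inequality via Rellich compactness and connectedness of $\Omega$, and the closed range theorem applied to the pair $(\nabla,-\div)$. No gaps; the compactness of $L^2(\Omega)\hookrightarrow H^{-1}(\Omega)$ that you use follows by duality from Rellich--Kondrachov on the bounded Lipschitz domain $\Omega$.
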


Since $-\div$ is the dual operator of $\nabla$  we have the following immediate corollary. 

\begin{cor}\label{range_div:cor}
The range of $\div\colon H_0^{1}(\Omega;\R^n)\to L^2(\Omega)$ is equal to
\begin{equation*}
(\Null \nabla)^\perp=\left\{p\in L^2(\Omega):\int_\Omega p\,dx=0\right\}.
\end{equation*}
\end{cor}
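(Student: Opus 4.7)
The approach is to deduce this corollary from the previous one via the Banach space closed range theorem, exactly as the preceding sentence in the text suggests. I would begin by verifying explicitly that $-\div\colon H_0^1(\Omega;\R^n)\to L^2(\Omega)$ is the transpose of $\nabla\colon L^2(\Omega)\to H^{-1}(\Omega;\R^n)$ under the canonical identifications $(L^2(\Omega))'=L^2(\Omega)$ and $(H^{-1}(\Omega;\R^n))'=H_0^1(\Omega;\R^n)$. This is simply integration by parts: for $p\in L^2(\Omega)$ and $v\in H_0^1(\Omega;\R^n)$ the distributional gradient satisfies
\[
\langle \nabla p,v\rangle_{H^{-1},H_0^1}=-\int_\Omega p\,\div v\,dx,
\]
where the vanishing trace of $v$ removes any boundary contribution. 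Hence $\nabla^{*}=-\div$.

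Corollary~\ref{range_grad:cor} already states $\Range(\nabla)=(\Null\div)^\perp$, which in particular is closed in $H^{-1}(\Omega;\R^n)$ (it is an annihilator). Banach's closed range theorem then gives that $\Range(\nabla^{*})$ is closed in $L^2(\Omega)$ and equals $(\Null\nabla)^\perp$, with the annihilator computed inside $L^2(\Omega)$; since $L^2(\Omega)$ is a Hilbert space this annihilator is the orthogonal complement. Because $\Range(-\div)=\Range(\div)$, we conclude $\Range(\div)=(\Null\nabla)^\perp$.

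It only remains to identify the kernel on the right. If $p\in L^2(\Omega)$ satisfies $\nabla p=0$ as an element of $H^{-1}(\Omega;\R^n)$, then $p$ is locally constant; connectedness of $\Omega$ forces $p$ to be a constant, so $\Null\nabla=\R$. Its orthogonal complement in $L^2(\Omega)$ is precisely $\{p\in L^2(\Omega):\int_\Omega p\,dx=0\}$, which completes the proof. There is no real obstacle: the preceding corollary has already done the substantive work of establishing closedness of $\Range(\nabla)$, and the present statement amounts to bookkeeping with adjoints plus the elementary observation that distributional constants in a connected domain are genuine constants.
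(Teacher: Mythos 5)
Your proof is correct and follows exactly the route the paper intends: the text derives this corollary from Corollary~\ref{range_grad:cor} with the one-line remark that $-\div$ is the transpose of $\nabla$, and your argument simply fills in the details of that duality step (closedness of $\Range(\nabla)$ as an annihilator, the closed range theorem, and the identification $\Null\nabla=\R$ by connectedness). Nothing is missing.
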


Ne\v{c}as' inequality is intimately related to Korn's inequality. We state below three versions of the Korn inequality. The third version is the most relevant one for the mixed b.v.p. \eqref{Stokes:bvp}.

\begin{thm}[First Korn inequality]\label{Korn1:thm}
There exists a constant $K$ depending only on $\Omega$ such that
\begin{equation*}
\norm{\nabla v}_{L^2(\Omega)}\leq K\left(\norm{v}_{L^2(\Omega)}^2+\norm{\ee(\nabla v)}_{L^2(\Omega)}^2\right)^{1/2}\quad \forall v\in H^1(\Omega;\R^n).
\end{equation*} 
\end{thm}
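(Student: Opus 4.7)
The plan is to derive Korn's first inequality directly from Ne\v{c}as' inequality applied componentwise to the partial derivatives of $v$. The bridge between the two inequalities is the classical identity
\begin{equation*}
\partial_k \partial_j v_i = \partial_j \ee_{ik}(\nabla v) + \partial_k \ee_{ij}(\nabla v) - \partial_i \ee_{jk}(\nabla v),
\end{equation*}
valid in the sense of distributions for $v\in H^1(\Omega;\R^n)$, which one verifies by expanding the right-hand side using $\ee_{ij}(\nabla v)=\tfrac12(\partial_j v_i+\partial_i v_j)$ and observing that the symmetric terms cancel to leave a single second derivative.

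First I would fix indices $i,j$ and view $T=\partial_j v_i$ as an element of $L^2(\Omega)\subset H^{-1}(\Omega)$. The identity above shows that each component of $\nabla T$ is a first-order distributional derivative of some component of $\ee(\nabla v)\in L^2(\Omega;\R^{n\times n})$, hence lies in $H^{-1}(\Omega)$ with
\begin{equation*}
\norm{\nabla T}_{H^{-1}(\Omega)} \leq C\,\norm{\ee(\nabla v)}_{L^2(\Omega)}.
\end{equation*}
On the other hand, since the differentiation operator $\partial_j\colon L^2(\Omega)\to H^{-1}(\Omega)$ is bounded, we trivially have $\norm{T}_{H^{-1}(\Omega)}=\norm{\partial_j v_i}_{H^{-1}(\Omega)}\leq \norm{v_i}_{L^2(\Omega)}$.

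Next I would feed these two estimates into Ne\v{c}as' inequality applied to $T$, obtaining
\begin{equation*}
\norm{\partial_j v_i}_{L^2(\Omega)} \leq C\bigl(\norm{v_i}_{L^2(\Omega)}+\norm{\ee(\nabla v)}_{L^2(\Omega)}\bigr).
\end{equation*}
Squaring and summing over $i,j=1,\dotsc,n$ yields the desired bound with a constant $K$ depending only on $\Omega$ (through the Ne\v{c}as constant).

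The only delicate point is the verification that $T=\partial_j v_i$ actually belongs to the space $X(\Omega)$ for $v\in H^1(\Omega;\R^n)$, which a priori only gives $T\in L^2(\Omega)$; the identity above is what furnishes $\nabla T\in H^{-1}(\Omega;\R^n)$, and thus unlocks the application of Ne\v{c}as' inequality. No density or approximation argument is needed, since Ne\v{c}as' inequality has already been established for general $p\in L^2(\Omega)$ on Lipschitz domains in the theorem preceding this one.
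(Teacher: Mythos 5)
Your proof is correct and is essentially the argument the paper points to: the paper omits the proof and cites Duvaut--Lions (Ch.~III, Theorem~3.1), whose proof is exactly this reduction to Ne\v{c}as' inequality via the identity $\partial_k\partial_j v_i=\partial_j \ee_{ik}(\nabla v)+\partial_k \ee_{ij}(\nabla v)-\partial_i \ee_{jk}(\nabla v)$. One small simplification: since $v\in H^1(\Omega;\R^n)$ gives $T=\partial_j v_i\in L^2(\Omega)$ outright, you only need the inequality part of Ne\v{c}as' theorem (stated for all $p\in L^2(\Omega)$), not the identification $X(\Omega)=L^2(\Omega)$, so the ``delicate point'' you flag is not actually needed.
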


For a proof based on Ne\v{c}as' inequality, see Theorem~3.1 in Chapter~III of Duvaut and Lions \cite{Duv76}. The authors assume that $\partial \Omega$ is of class $C^1,$ but their proof is valid when $\partial \Omega$ is of class $C^{0,1},$ as observed by Ciarlet \cite{Cia10}.


\begin{thm}[Second Korn inequality]\label{Korn2:thm}
There exists a constant $K$ depending only on $\Omega$ such that
\begin{equation*}
\norm{\{v\}}_{H^1(\Omega)/R}
\leq K\norm{\ee(\nabla v)}_{L^2(\Omega)}\quad \forall v\in H^1(\Omega;\R^n).
\end{equation*} 
\end{thm}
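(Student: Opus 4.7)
The plan is to reduce the Second Korn inequality to the First via a standard contradiction/compactness argument. The first Korn inequality (Theorem~\ref{Korn1:thm}) controls $\nabla v$ by $\ee(\nabla v)$ and the \emph{lower order} term $v$, so to eliminate the latter I want to exploit the Rellich--Kondrachov compact embedding $H^1(\Omega;\R^n)\hookrightarrow L^2(\Omega;\R^n)$, combined with the fact that $R=\Null\,\ee(\nabla\cdot)$ is finite-dimensional (dimension $n+\binom{n}{2}$) and hence closed in $H^1$.

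Concretely, I would suppose the inequality fails: for each $k\in\N$ there is $v_k\in H^1(\Omega;\R^n)$ with
\begin{equation*}
\norm{\{v_k\}}_{H^1(\Omega)/R}=1\quad\text{and}\quad \norm{\ee(\nabla v_k)}_{L^2(\Omega)}<\tfrac{1}{k}.
\end{equation*}
By the definition of the quotient norm I choose representatives (still denoted $v_k$) with $\norm{v_k}_{H^1(\Omega)}\leq 2$. By Rellich--Kondrachov, a subsequence converges in $L^2(\Omega;\R^n)$ to some $v$, and in particular $(v_k)$ is Cauchy in $L^2$. The First Korn inequality applied to $v_k-v_j$ gives
\begin{equation*}
\norm{\nabla(v_k-v_j)}_{L^2}^2 \leq K^2\bigl(\norm{v_k-v_j}_{L^2}^2+\norm{\ee(\nabla(v_k-v_j))}_{L^2}^2\bigr),
\end{equation*}
and both terms on the right tend to $0$, so $(v_k)$ is Cauchy in $H^1$. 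Thus $v_k\to v$ in $H^1$, and by continuity $\ee(\nabla v)=0$, i.e.\ $v\in R$. But then $\{v_k\}\to\{v\}=0$ in $H^1(\Omega)/R$, contradicting $\norm{\{v_k\}}_{H^1/R}=1$.

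The only nontrivial ingredients are (a) the First Korn inequality, which is already granted, and (b) the compact embedding, which holds because $\Omega$ is a bounded Lipschitz domain. The main conceptual point that has to be checked carefully is the choice of representatives: one needs that $\{v_k\}$ having unit quotient norm does allow one to choose $v_k$ with uniformly bounded $H^1$ norm. This is immediate from the infimum definition of the quotient norm (pick any representative within $1$ of the infimum). No separate projection onto $R^\perp$ is required, though it would be an equivalent way to proceed since $R$ is finite-dimensional and closed. I do not foresee any real obstacle; the argument is entirely analogous to the classical Peetre--Tartar lemma pattern for eliminating compact lower-order terms.
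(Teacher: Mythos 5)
Your proof is correct: the contradiction/compactness argument (normalize in the quotient, pick representatives of bounded $H^1$ norm, use Rellich--Kondrachov plus the First Korn inequality on differences to get $H^1$ convergence to an element of $R$) is the standard route, and the representative-selection step you flag is indeed the only point needing care. The paper itself only cites Duvaut--Lions and Ciarlet for this statement rather than proving it, but it runs exactly this same Peetre--Tartar-type argument in its proof of the Third Korn inequality (Theorem~\ref{Korn3:thm}), so your approach matches the paper's method in all essentials.
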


For a proof, see Theorem~3.4 of \cite{Duv76} or Theorem~2.3 (a) of \cite{Cia10}.


To prove the third version of Korn's inequality we need an auxiliary result from classical mechanics.

\begin{lem}\label{hyp:lem}
Let $\Gamma$ be a hypersurface in $\R^n$ defined by
\begin{equation*}
\Gamma=\bigl\{x\in \R^n: x_n=f(x_1,\dotsc,x_{n-1}),\quad f\in C^{0,1}(\R^{n-1})\bigr\}.
\end{equation*}
Let $v$ be a rigid body velocity that vanishes on a set $E\subset \Gamma$ of positive surface measure. Then $v$ is identically zero.
\end{lem}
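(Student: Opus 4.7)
The plan is to parameterize $\Gamma$ by the Lipschitz chart $\phi\colon \R^{n-1}\to\Gamma$, $\phi(y)=(y_1,\dots,y_{n-1},f(y))$, set $E'=\phi^{-1}(E)\subset\R^{n-1}$, and analyze the Lipschitz composition $F=v\circ\phi$. Writing $v(x)=Ax+b$ with $A\in\Skew(n)$ and $b\in\R^n$, one has
\[
F(y)=\sum_{k=1}^{n-1} y_k\,Ae_k + f(y)\,Ae_n + b,
\]
so $F\colon\R^{n-1}\to\R^n$ is Lipschitz and, by hypothesis, vanishes on $E'$.

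First I would check that $E'$ has positive $(n-1)$-dimensional Lebesgue measure. This is immediate from the area formula: surface measure on $\Gamma$ is the pushforward under $\phi$ of the measure $\sqrt{1+\abs{\nabla f}^2}\,dy$, and $\sqrt{1+\abs{\nabla f}^2}$ is essentially bounded in terms of the Lipschitz constant of $f$, so $E$ having positive surface measure forces $E'$ to have positive Lebesgue measure.

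Next, invoke Rademacher's theorem to differentiate both $F$ and $f$ almost everywhere, together with the classical real-analysis fact that a Lipschitz function vanishing on a measurable set has vanishing derivative at almost every Lebesgue density point of that set. This gives, at almost every $y\in E'$,
\[
\partial_k F(y) = Ae_k + f_{,k}(y)\,Ae_n = 0,\qquad k=1,\dots,n-1.
\]

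The remaining work is purely algebraic and exploits skew-symmetry. Taking the $n$-th coordinate of the identity above yields $a_{nk}+f_{,k}(y)\,a_{nn}=a_{nk}=0$, since $a_{nn}=0$; by skew-symmetry $a_{kn}=-a_{nk}=0$ as well, so the column $Ae_n$ vanishes entirely. Substituting this back in gives $Ae_k=0$ for each $k<n$, whence $A=0$. Finally, $F\equiv b$ on $\R^{n-1}$ and $F=0$ on the nonempty set $E'$ force $b=0$, so $v\equiv 0$. The only mildly delicate point is the "zero-derivative on a zero-set" lemma, which is standard and is proved by testing the difference quotient of $F$ along approximating sequences inside $E'$ at Lebesgue density points; everything else is routine bookkeeping.
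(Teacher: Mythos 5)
Your proof is correct, but it follows a genuinely different route from the paper's. The paper never differentiates anything on $\Gamma$: writing $v(x)=Ax+b$, it tests the identity $\int_E \phi\,(Ax+b)\,dS=0$ against a basis of $L^2(E,dS)$ dual to the coordinate functions $1,x_1,\dotsc,x_{n-1}$ (such a dual basis exists precisely because these functions are linearly independent on a set of positive measure), thereby producing $n-1$ linearly independent vectors in $\Null A$; it then concludes $A=0$ from the fact that a skew-symmetric matrix has even rank, and $b=0$ follows. Your argument is instead pointwise-differential: you pull $v$ back to $\R^{n-1}$ via the Lipschitz chart, invoke Rademacher's theorem together with the standard fact that a Lipschitz function vanishing on a set vanishes to first order at a.e.\ density point of that set, and then exploit skew-symmetry coordinatewise ($a_{nn}=0$ kills the last row, hence the last column, hence every column). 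The trade-off is clear: the paper's proof is purely integral and needs no differentiation theory, only the $L^2$ linear independence of $1,x_1,\dotsc,x_{n-1}$ on $E$ and the even-rank property of skew matrices; yours requires Rademacher and the density-point lemma (both standard), but in exchange the linear algebra is entirely elementary and, notably, extracts $A=0$ from the data at a \emph{single} good point of $E'$ rather than from the whole set. One small imprecision worth fixing in a write-up: the relevant points are those lying \emph{in} $E'$ that are simultaneously density points of $E'$ and points of differentiability of $f$ (you need $F(y_0)=0$ to run the difference-quotient argument); since this set has full measure in $E'$ and $\abs{E'}>0$, it is nonempty, which is all you use.
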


\begin{proof}
Assume $v(x)=Ax+b,$ where $A\in \Skew(n)$ and $b\in \R^n.$ Since $v$ vanishes on $E$ we have
\begin{equation}\label{Axb:eq}
\int_E \phi\,Ax\,dS+\int_E \phi\,b\,dS=0\quad \forall \phi\in L^2(E,dS),
\end{equation}
where $dS=\sqrt{1+\abs{\nabla f(x')}^2}\,dx',$ $x'=(x_1,\cdots,x_{n-1}).$ Let $\mathcal{P}$ be the subspace of $L^2(E,dS)$ which is spanned by the polynomials $p_0,\dotsc,p_{n-1}$ defined by
\begin{equation*}
p_i(x)=
\begin{cases}
1 & (i=0)\\
x_i & (1\leq i\leq n-1).
\end{cases}
\end{equation*}
Since $E$ has positive measure there exists a basis $q_0,\dotsc q_{n-1}$ of $\mathcal{P}$ such that
\begin{equation}\label{qipj}
\int_E q_ip_j\,dS=\delta_{ij}\quad (0\leq i,j\leq n-1).
\end{equation}
Taking $\phi=q_0$ in \eqref{Axb:eq} we deduce $b=0.$ Let $y_1,\dotsc,y_{n-1}$ be the vectors in $\R^n$ defined by
\begin{equation*}
y_i=\int_{E} q_i\,x\,dS
\quad (i=1,\dotsc, n-1).
\end{equation*}
By \eqref{qipj}, $y_1,\dotsc,y_{n-1}$ are linearly independent. Taking $\phi=q_i$ in \eqref{Axb:eq} we deduce
\[
Ay_i=0\quad (i=1,\dotsc, n-1),
\]
so $\dim \Null A\geq n-1.$ Since $A$ is skew-symmetric we conclude that $A=0.$
\end{proof}

\begin{rem}
If $\Gamma_D$ has an interior point, the proof of Lemma~\ref{hyp:lem} can be slightly simplified.
\end{rem}

%
%

\begin{thm}[Third Korn inequality]\label{Korn3:thm}
Assume $\int_{\Gamma_D}dS>0.$ Then there exists a constant $K$ depending only on $\Omega$ and $\Gamma_D$ such that
\begin{equation}\label{Korn3:ineq}
\norm{v}_{H^1(\Omega)}\leq K\norm{\ee(\nabla v)}_{L^2(\Omega)}\quad \forall v\in W(\Omega).
\end{equation} 
\end{thm}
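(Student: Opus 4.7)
The plan is to combine the Second Korn inequality with a compactness argument, using Lemma~\ref{hyp:lem} to rule out nontrivial rigid body velocities vanishing on $\Gamma_D$.

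First I would note that \eqref{Korn3:ineq} is equivalent to saying that on the closed subspace $W \subset H^1(\Omega;\R^n)$, the seminorm $v \mapsto \norm{\ee(\nabla v)}_{L^2(\Omega)}$ is in fact a norm equivalent to the full $H^1$ norm. Since by the Second Korn inequality we already have
\[
\inf_{r \in R}\norm{v-r}_{H^1(\Omega)} \le K\norm{\ee(\nabla v)}_{L^2(\Omega)} \quad \forall v \in W,
\]
the only obstruction to the desired estimate comes from the rigid body velocities. The statement is equivalent to $R \cap W = \{0\}$, together with a quantitative argument.

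I would argue by contradiction and compactness. Suppose \eqref{Korn3:ineq} fails; then there exists a sequence $v_k \in W$ with $\norm{v_k}_{H^1(\Omega)} = 1$ and $\norm{\ee(\nabla v_k)}_{L^2(\Omega)} \to 0$. By Theorem~\ref{Korn2:thm}, there exist $r_k \in R$ with $\norm{v_k - r_k}_{H^1(\Omega)} \to 0$. Then $\{r_k\}$ is bounded in $H^1(\Omega;\R^n)$, and since $R$ is a finite-dimensional subspace (parameterised by $\Skew(n) \times \R^n$), a subsequence converges to some $r \in R$ in $H^1$. Consequently $v_k \to r$ in $H^1$, and $\norm{r}_{H^1(\Omega)} = 1$. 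Since $W$ is closed in $H^1(\Omega;\R^n)$ (being the kernel of the continuous trace map onto $L^2(\Gamma_D;\R^n)$), we get $r \in W$, i.e.\ $r = 0$ on $\Gamma_D$.

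The main obstacle is to conclude that $r \equiv 0$, which is where Lemma~\ref{hyp:lem} enters. The hypothesis $\int_{\Gamma_D} dS > 0$ together with the finite Lipschitz atlas $\{U_i, R_i, f_i\}$ from the definition of the fluid domain forces $\int_{\Gamma_D \cap U_{i_0}} dS > 0$ for some index $i_0$. After pulling back by the rotation $R_{i_0}$, we obtain a subset $E$ of positive surface measure on the graph hypersurface $\Gamma = \{x_n = f_{i_0}(x_1,\dotsc,x_{n-1})\}$ on which the (rotated) rigid body velocity vanishes. Lemma~\ref{hyp:lem} then yields $r = 0$, contradicting $\norm{r}_{H^1(\Omega)} = 1$. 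The constant $K$ depends only on $\Omega$ and $\Gamma_D$ because the contradiction argument does, and this completes the proof.
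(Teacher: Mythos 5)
Your proof is correct and follows the same overall strategy as the paper: argue by contradiction with a normalized sequence, extract a limiting rigid body velocity vanishing on $\Gamma_D$, and invoke Lemma~\ref{hyp:lem} to force it to be zero. The one genuine difference lies in the compactness step. The paper applies the First Korn inequality (Theorem~\ref{Korn1:thm}) to the differences $v_{m+k}-v_m$, combined with the Rellich--Kondrachov theorem, to show the sequence is Cauchy in $H^1$ and hence converges to some $v$ with $\ee(\nabla v)=0$. You instead use the Second Korn inequality (Theorem~\ref{Korn2:thm}) to approximate $v_k$ by rigid body velocities $r_k$ and then exploit the finite dimensionality of $R$ to extract a convergent subsequence. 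Both routes are valid; yours trades an explicit appeal to Rellich--Kondrachov for the (elementary) observation that $R$ is finite-dimensional, and it isolates more cleanly that the whole issue is whether $R\cap W=\{0\}$. You are also more careful than the paper on one point: you spell out the localization to a chart $U_{i_0}$ with $\int_{\Gamma_D\cap U_{i_0}}dS>0$ and the pullback by the rotation $R_{i_0}$ needed to place yourself in the graph setting of Lemma~\ref{hyp:lem} (noting that conjugating a skew-symmetric matrix by a rotation keeps it skew-symmetric, so the pulled-back field is still a rigid body velocity), a step the paper's proof leaves implicit.
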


\begin{proof}
The proof is by contradiction. Suppose \eqref{Korn3:ineq} were false. Then for each positive integer $m$ there exists $v_m$ in $W(\Omega)$ such that $\norm{v_m}_{H^1}=1$ and
\begin{equation}\label{Tfalse:ineq}
\norm{\ee(\nabla v_m)}_{L^2}\leq \frac{1}{m}.
\end{equation}
By the Rellich-Kondrachov theorem there exists a subsequence, also denoted as $v_{m},$ that converges strongly in $L^2(\Omega;\R^n).$ Applying the first Korn inequality (Theorem~\ref{Korn1:thm}) to $v_{m+k}-v_{m},$ $k>0,$ we deduce from \eqref{Tfalse:ineq} that $v_m$ is a Cauchy sequence in $H^1(\Omega;\R^n).$  Thus $v_m$ converges strongly to some $v$ in $H^1(\Omega; \R^n).$ Letting $m\to \infty$ in \eqref{Tfalse:ineq} gives $\ee(\nabla v)=0,$ so we have $v=r$ a.e. in $\Omega$ for some $r\in R(\Omega).$ Since $r$ vanishes on a subset of $\partial \Omega$ of positive surface measure, Lemma~\ref{hyp:lem} implies $r=0.$ This is a contradiction, because $\norm{v}_{H^1}=1.$\end{proof}

\begin{rem}
Theorem~\ref{Korn3:thm} is formulated as Theorem~3.3 in \cite{Duv76} for $\partial \Omega$ of class $C^{1}.$ The proof presented here is merely a reproduction that proof, except for a minor detail: The implication
\begin{equation*}
v\in R(\Omega)\cap \{v\in H^1(\Omega;\R^n): v=0\text{ on }\Gamma_D\}\implies v=0
\end{equation*}
provided $\int_{\Gamma_D}dS>0,$ is not proved in \cite{Duv76}. Lemma~\ref{hyp:lem} bridges this gap. 
\end{rem}

%

\section{Some results in vector analysis}\label{vector_analysis:sec}

The main aim of this section is to establish the existence of a pressure operator, a.k.a. de Rham operator, which is adapted to the b.v.p. \eqref{Stokes:bvp}.  First, we establish that the space $H_0^{1/2}(\Gamma_N;\R^n)$ is non-trivial under present assumptions.

\begin{lem}\label{psi0:lem}
Assume $\Gamma_D$ and $\Gamma_N$ as in Theorem~\ref{main:thm}(iii). Then there exist 
\begin{enumerate}
\item $\rho\in C^{0,1}(\R^n)$ such that
\begin{equation*}
\rho=0\quad\text{on }\Gamma_D\quad \text{and}\quad 
\int_{\partial \Omega} \rho\, dS>0,
\end{equation*}

\item $v\in C_c^\infty(\R^n;\R^n)$ and a constant $c>0$ depending only on $\partial\Omega$ such that
\begin{equation*} 
c\leq v\cdot \hat{n}\leq 1\quad \text{a.e. on }\partial \Omega,
\end{equation*}

\item $\hat{v}\in H^{1/2}(\partial \Omega;\R^n)$ such that
\begin{equation*} 
\hat{v}=0\quad \text{on }\Gamma_D\quad \text{and}\quad \int_{\partial \Omega} \hat{v}\cdot \hat{n}\,dS=1.
\end{equation*}
\end{enumerate}
\end{lem}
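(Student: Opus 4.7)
My plan is to construct the three objects in sequence, since (iii) will follow painlessly once (i) and (ii) are in hand. Throughout, I will use that $\Gamma_D$ is closed in $\partial\Omega$ and that $\Gamma_N$ has positive surface measure.

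For (i), I would simply take $\rho(x)=\dist(x,\Gamma_D)$. This is $1$-Lipschitz on $\R^n$ and, because $\Gamma_D$ is closed, vanishes precisely on $\Gamma_D$. On $\Gamma_N$ it is therefore strictly positive, and since $\Gamma_N$ has positive surface measure,
\[
\int_{\partial\Omega}\rho\,dS=\int_{\Gamma_N}\rho\,dS>0.
\]

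For (ii), the idea is to build $v$ out of the constant vectors $-R_i e_n$ coming from the Lipschitz charts of $\partial\Omega$. In the rotated coordinates $y=R_i^T x$, the boundary piece $U_i\cap\partial\Omega$ is the graph of $f_i$, and the outward normal reads $(\nabla f_i(y'),-1)/\sqrt{1+|\nabla f_i(y')|^2}$; a direct calculation then gives
\[
(-R_i e_n)\cdot\hat{n}(x)=\frac{1}{\sqrt{1+|\nabla f_i(y')|^2}}\geq \frac{1}{\sqrt{1+L_i^2}}\quad\text{on }U_i\cap\partial\Omega,
\]
where $L_i$ is the Lipschitz constant of $f_i$. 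I would extract a finite subcover $U_1,\dots,U_N$ of $\partial\Omega$, pick a partition of unity $\phi_i\in C_c^\infty(U_i)$ with $\sum_i\phi_i\equiv 1$ in a neighborhood $\mathcal{N}$ of $\partial\Omega$, and a cutoff $\eta\in C_c^\infty(\R^n)$ equal to $1$ on $\mathcal{N}$. Setting $v=\eta\sum_i\phi_i(-R_i e_n)\in C_c^\infty(\R^n;\R^n)$, the vector $v(x)$ on $\partial\Omega$ is a convex combination of unit vectors, so $|v|\leq 1$ and hence $v\cdot\hat{n}\leq 1$, while
\[
v(x)\cdot\hat{n}(x)=\sum_i\phi_i(x)(-R_i e_n)\cdot\hat{n}(x)\geq c\sum_i\phi_i(x)=c\quad\text{with }c=\min_i(1+L_i^2)^{-1/2}.
\]

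For (iii), I would simply rescale the product of (i) and (ii). Since $\rho\in C^{0,1}(\R^n)$ and $v\in C_c^\infty(\R^n;\R^n)$, the function $\rho v$ is Lipschitz on $\R^n$, lies in $W^{1,\infty}(\Omega;\R^n)\subset H^1(\Omega;\R^n)$, and therefore has a trace on $\partial\Omega$ in $H^{1/2}(\partial\Omega;\R^n)$; this trace vanishes on $\Gamma_D$ because $\rho$ does. The integral
\[
\int_{\partial\Omega}\rho(v\cdot\hat{n})\,dS\geq c\int_{\Gamma_N}\rho\,dS>0
\]
is strictly positive by (i) and (ii), so dividing $\rho v|_{\partial\Omega}$ by this number produces the desired $\hat{v}$. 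The one piece of actual work is the chart-and-partition-of-unity construction in (ii); everything else is essentially bookkeeping.
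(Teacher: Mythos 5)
Your proposal is correct and follows essentially the same route as the paper: $\rho=\dist(\cdot,\Gamma_D)$ for (i), the vectors $-R_ie_n$ glued by a partition of unity subordinate to the boundary charts for (ii), and a rescaling of $\rho v$ for (iii). The only differences are cosmetic (the explicit cutoff $\eta$ and the convex-combination bound $|v|\le 1$), which are harmless elaborations of details the paper leaves implicit.
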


\begin{proof}
(i)\enspace Given $x\in \R^n,$ let $\rho(x)$ denote the distance from $x$ to the set $\Gamma_D,$ i.e.
\begin{equation*}
\rho(x)=\inf_{y\in \Gamma_D} \abs{x-y}\quad (x\in \R^n).
\end{equation*}
Clearly $\rho\in C^{0,1}(\R^n)$ with Lipschitz constant equal to one. Since $\Gamma_N$ is relatively open and non-empty, we deduce
\begin{equation*}
\int_{\partial \Omega} \rho\,dS>0.
\end{equation*}

\noindent (ii)\enspace Let $\{U_i\}$ be the open cover used in the definition of $\partial \Omega$ with corresponding rotations $\{R_i\}$ and Lipschitz functions  $\{f_i\}.$ Set $v_i=-R_ie_n$ and let $L_i$ denote the Lipschitz constant of $f_i.$ It is readily checked that
\begin{equation*}
\frac{1}{\sqrt{1+L_i^2}}
\leq v_i\cdot \hat{n}(x)\leq 1
\end{equation*}
at each point $x\in \partial \Omega\cap U_i$ where $\hat{n}(x)$ is defined. Let $\{\varphi_i\}$ be a smooth partition of unity of $\partial \Omega$ subordinate to $\{U_i\}$ and define
\begin{equation*}
v(x)=\sum_i \varphi_i(x)v_i,\quad c=\min_i \frac{1}{\sqrt{1+L_i^2}}.
\end{equation*}
It follows that $c\leq v(x)\cdot \hat{n}(x)\leq 1$ for a.e. $x\in \partial \Omega.$

\noindent (iii)\enspace Let $\rho$ and $v$ be as in (i) and (ii) respectively. Clearly there exists $\hat{v}$ of the form $\hat{v}(x)=\rho(x)v(x)$ such that $\int_{\partial \Omega} \hat{v}\cdot \hat{n}\,dS=1.$
\end{proof}

\begin{rem}
Part (ii) of Lemma~\ref{psi0:lem} is almost as Lemma~1.5.1.9 of \cite{Gri85}. The proof is also the same.
\end{rem}

\begin{rem}\label{psi0:rem}
If $\Gamma_D=\varnothing$ there are simpler choices of $\hat{v}$ than the above construction, e.g. one can choose $\hat{v}$ as $\hat{v}(x)=(n\abs{\Omega})^{-1}x.$ In order to choose $\hat{v}$ as $\rho\,\hat{n},$ which is perhaps the most obvious choice, $\partial \Omega$ must be of class $C^{1,1}.$
\end{rem}

Corollary~\ref{range_div:cor} is often attributed to Bogovski{\u\i} \cite{Bog79} (see also Chapter III of \cite{Galdi11}). We shall prove a similar statement which is more appropriate for the mixed boundary condition. Namely, that there exists a bounded linear operator
\[
B\colon L^2(\Omega)\to W/V(\Omega),
\]
called the Bogovski{\u\i} operator, which is the inverse of the divergence operator when its nullspace is collapsed to zero.

\begin{thm}[Bogovski{\u\i}]\label{range_div:thm}
Assume $\Gamma_D$ and $\Gamma_N$ as in Theorem~\ref{main:thm}(iii). Then for any $f\in L^2(\Omega)$ there exists a solution $u\in H^1(\Omega;\R^n)$ of the b.v.p.
\begin{equation*}
\left\{
\begin{aligned}
\div u& =f\quad \text{in }\Omega\\
u& =0\quad \text{on } \Gamma_D.
\end{aligned}\right.
\end{equation*}
Furthermore, $u$ is unique in $W(\Omega)$ modulo $V(\Omega)$ with
\begin{equation}\label{Bogineq}
\norm{\{u\}}_{W/V(\Omega)} \leq C\norm{f}_{L^2(\Omega)},
\end{equation}
where the constant $C$ depends only on $\Omega$ and $\Gamma_D.$
\end{thm}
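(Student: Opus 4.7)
The plan is to reduce to the classical Bogovski\u{\i} theorem (Corollary~\ref{range_div:cor}) by absorbing the mean of $f$ into a fixed background field supplied by Lemma~\ref{psi0:lem}(iii). When $\Gamma_D = \partial\Omega$ the solvability of $\div u = f$ with $u\in H_0^1$ requires the compatibility condition $\int_\Omega f\,dx = 0$; here the only reason this condition can be dropped is that flux is allowed to escape through $\Gamma_N$, and Lemma~\ref{psi0:lem} is precisely what quantifies this possibility.

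First I would fix, once and for all, a reference field $w \in W$ with $\int_\Omega \div w\,dx = 1$. Taking $\hat{v} \in H^{1/2}(\partial\Omega;\R^n)$ from Lemma~\ref{psi0:lem}(iii) and extending it to a function in $H^1(\Omega;\R^n)$ via a bounded right inverse of the trace operator yields such a $w$; since $\hat{v} = 0$ on $\Gamma_D$ one has $w \in W$, and the divergence theorem gives $\int_\Omega \div w\,dx = \int_{\partial\Omega} \hat{v}\cdot\hat{n}\,dS = 1$. The norms $\norm{w}_{H^1}$ and $\norm{\div w}_{L^2}$ then depend only on $\Omega$ and $\Gamma_D$.

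Given $f \in L^2(\Omega)$, set $m = \int_\Omega f\,dx$ and $\tilde{f} = f - m\,\div w$, so that $\int_\Omega \tilde{f}\,dx = 0$. By Corollary~\ref{range_div:cor} there is $u_0 \in H_0^1(\Omega;\R^n) \subset W$ with $\div u_0 = \tilde{f}$ and $\norm{u_0}_{H^1} \leq C\norm{\tilde{f}}_{L^2}$. Setting $u = u_0 + m w \in W$ gives $\div u = f$, and the Cauchy--Schwarz bound $\abs{m} \leq \abs{\Omega}^{1/2}\norm{f}_{L^2}$ combined with the fixed size of $w$ yields $\norm{u}_{H^1} \leq C'\norm{f}_{L^2}$ with $C'$ depending only on $\Omega$ and $\Gamma_D$; this implies \eqref{Bogineq} a fortiori. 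Uniqueness in $W/V$ is immediate, since any two solutions in $W$ differ by an element of $W$ of vanishing divergence, i.e., by an element of $V$. The only place where the hypothesis that $\Gamma_N$ has an interior point is essential is the construction of $w$; once Lemma~\ref{psi0:lem}(iii) is in hand the rest is routine bookkeeping built on the classical Bogovski\u{\i} result, so I do not anticipate a serious obstacle.
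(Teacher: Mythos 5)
Your construction of the solution is exactly the paper's: set $m=\int_\Omega f\,dx$, subtract $m\,\div w$ (the paper writes $a\,\div\hat v$, using the Lipschitz field $\rho v$ of Lemma~\ref{psi0:lem} directly rather than a trace lifting) to reduce to the mean-zero case, solve that case in $H_0^1(\Omega;\R^n)$ by Corollary~\ref{range_div:cor}, and add back $mw$; uniqueness modulo $V$ is handled identically. The one place you part ways is the estimate \eqref{Bogineq}. The paper never bounds a particular representative: it considers the induced map $A\colon W/V\to L^2(\Omega)$, $A\{u\}=\div u$, which is bounded, injective by definition of $V$, and onto by the construction just given, and then invokes the open mapping theorem to conclude $\norm{\{u\}}_{W/V}=\norm{A^{-1}f}_{W/V}\leq \norm{A^{-1}}\,\norm{f}_{L^2}$. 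Your route instead requires the quantitative form of Corollary~\ref{range_div:cor}, namely $\norm{u_0}_{H^1}\leq C\norm{\tilde f}_{L^2}$, which that corollary as stated does not supply; it is of course true, but to justify it you would run the same open-mapping (or closed-range) argument on $\div\colon H_0^1(\Omega;\R^n)/V_0\to\bigl\{p\in L^2(\Omega):\int_\Omega p\,dx=0\bigr\}$, or cite Bogovski\u{\i}'s explicit formula. So your proof is complete modulo making that standard bound explicit; the paper's version is marginally more economical in that it applies the abstract argument once, directly to the mixed-boundary operator, and obtains \eqref{Bogineq} without estimating any representative, while yours yields the slightly stronger statement that the \emph{constructed} $u$ itself satisfies $\norm{u}_{H^1}\leq C\norm{f}_{L^2}$.
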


\begin{proof}
Given $f\in L^2(\Omega),$ set $a=\int_\Omega f\,dx$ and choose $\hat{v}$ as in Lemma~\ref{psi0:lem}. Since
\begin{equation*}
\int_\Omega (f-a\div\hat{v})\,dx=0,
\end{equation*}
Corollary~\ref{range_div:cor} asserts there exists $w\in H_0^1(\Omega;\R^n)$ such that
\begin{equation*}
f-a\div\hat{v}=\div w.
\end{equation*}
Thus $u=w+a\hat{v}$ belongs to $W(\Omega)$ and satisfies
$f=\div u.$

Let $A\colon W/V(\Omega)\to L^2(\Omega)$ be the linear operator defined by
\begin{equation*}
A\{u\}=\div u.
\end{equation*}
$A$ is continuous, because
\begin{equation*}
\norm{A\{u\}}_{L^2}=\norm{\div u}_{L^2}\leq \sqrt{n}\norm{\nabla u}_{L^2}\quad \forall u\in\{u\} 
\end{equation*}
which implies $\norm{A}\leq \sqrt{n}.$  Since $A$ is one-to-one and onto, the open mapping theorem asserts that its inverse $B\colon L^2(\Omega)\to W/V(\Omega)$ is continuous. Choose  $u\in W(\Omega)$ so that $f=\div u,$ which is equivalent to $\{u\}=Bf.$ Then
\begin{equation*}
\norm{\{u\}}_{W/V}=\norm{Bf}_{W/V}\leq \norm{B}\norm{f}_{L^2}.
\end{equation*}
\end{proof}

Note, that the constant $C$ in Theorem~\ref{range_div:thm} is actually the norm of the operator $B$ which is defined as
\[
\norm{B}=\sup_{\substack{f\in L^2(\Omega)\\ \norm{f}=1}} \inf_{\substack{v\in W(\Omega)\\ \div v=f}}
\norm{v}_{H^1(\Omega)}.
\]
Since $B$ is an isomorphism, so is the dual operator
\[
B'\colon V^\perp(\Omega)\to L^2(\Omega).
\]
Note that $V^\perp(\Omega)$ is identified with $(W/V)'(\Omega).$ The operator $B'$ is usually called de Rham's operator, although ``pressure operator'' would be a more suitable name. Its properties are summarized in

\begin{cor}[De Rham--Tartar]\label{dRm2:thm}
Assume $\Gamma_D$ and $\Gamma_N$ as in Theorem~\ref{main:thm}(iii). Suppose $L$ in $W'(\Omega)$ satisfies
\begin{equation*}\label{LVperp}
\langle L,v\rangle=0\quad \forall v\in V(\Omega).
\end{equation*}
Then there exists a unique $p$ in $L^2(\Omega)$ such that
\begin{equation*}
\langle L,v\rangle=\int_\Omega p\,\div v\,dx\quad \forall v\in W(\Omega).
\end{equation*}
Moreover 
\begin{equation*}
\norm{p}_{L^2(\Omega)}\leq C\norm{L}_{W'(\Omega)}
\end{equation*}
where $C$ is the constant of Theorem~\ref{range_div:thm}.
\end{cor}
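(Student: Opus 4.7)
The plan is to leverage Theorem~\ref{range_div:thm}, which already contains the hard analytical content. Since $L$ annihilates $V$, it descends to a well-defined bounded linear functional $\tilde{L}\colon W/V\to \R$ via $\tilde{L}(\{v\})=\langle L,v\rangle_{W',W}$, with $\norm{\tilde{L}}_{(W/V)'}\leq \norm{L}_{W'}$. By Theorem~\ref{range_div:thm}, the divergence map $A\colon W/V\to L^2(\Omega)$ defined by $A\{v\}=\div v$ is a bounded linear bijection whose inverse satisfies $\norm{A^{-1}}\leq C$ with $C$ the constant from that theorem.

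The next step is to compose with $A^{-1}$ and invoke Riesz representation. The functional $\tilde{L}\circ A^{-1}\colon L^2(\Omega)\to \R$ is bounded with norm at most $C\norm{L}_{W'}$, so Riesz produces a unique $p\in L^2(\Omega)$ satisfying
\begin{equation*}
\tilde{L}(A^{-1}f)=\int_\Omega p\,f\,dx\quad \forall f\in L^2(\Omega),\qquad \norm{p}_{L^2(\Omega)}\leq C\norm{L}_{W'}.
\end{equation*}
Specializing to $f=\div v$ for $v\in W$ recovers the desired identity $\langle L,v\rangle_{W',W}=\int_\Omega p\div v\,dx$, and the norm estimate is built in.

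For uniqueness, if $p_1,p_2\in L^2(\Omega)$ both represent $L$, then $\int_\Omega (p_1-p_2)\div v\,dx=0$ for every $v\in W$; the surjectivity of $\div\colon W\to L^2(\Omega)$ (the existence part of Theorem~\ref{range_div:thm}) then forces $p_1=p_2$. The only substantive obstacle in this corollary is Theorem~\ref{range_div:thm} itself, whose proof rests on Bogovski\u{\i}'s construction combined with the Lipschitz cutoff $\hat v$ of Lemma~\ref{psi0:lem}; once that machinery is in hand, the Strong De Rham statement reduces to the short abstract duality argument above with no further delicate estimate required.
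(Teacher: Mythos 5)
Your proposal is correct and follows essentially the same route as the paper: the paper's Bogovski\u{\i} operator $B$ is your $A^{-1}$, and the transpose $B^TL$ used there is precisely the Riesz representative of $\tilde{L}\circ A^{-1}$ that you construct. Your explicit uniqueness argument via surjectivity of $\div\colon W\to L^2(\Omega)$ is a welcome (if minor) elaboration of what the paper leaves implicit.
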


\begin{proof}
Clearly $p=B'L.$ Since $\norm{B'}=\norm{B}$ the result follows.
\end{proof}


Next, we show that one can construct a divergence free ``lift'' of any trace in $H^{1/2}(\Gamma_D;\R^n).$

\begin{cor}[Lift operator]\label{lift:cor}
Assume $\Gamma_D$ and $\Gamma_N$ as in Theorem~\ref{main:thm}(iii). For any $h\in H^{1/2}(\Gamma_D;\R^n)$ there exists $u\in H^1(\Omega;\R^n)$ such that
\begin{equation}\label{lift:eq}
\left\{
\begin{aligned}
\div u& =0\quad \text{in }\Omega\\
u& =h\quad \text{on } \Gamma_D.
\end{aligned}\right.
\end{equation}
Moreover $u$ can be chosen so that
\begin{equation}\label{lift:est}
\norm{u}_{H^{1}(\Omega)}\leq C' \norm{h}_{H^{1/2}(\Gamma_D)}
\end{equation}
where the constant $C'$ depends only on $\Omega$ and $\Gamma_D.$
\end{cor}

\begin{proof}
Choose any $v\in H^1(\Omega;\R^n)$ such that $v=h$ on $\Gamma_D.$ According to Theorem~\ref{range_div:thm} there exists $w\in H^1(\Omega;\R^n)$ such that
\begin{equation*}
\left\{
\begin{aligned}
\div w& =\div v\quad \text{in }\Omega\\
w& =0\quad \text{on } \Gamma_D
\end{aligned}\right.
\end{equation*}
and
\[
\norm{w}_{H^1}<2\norm{\{w\}}_{W/V}
\]
Then $u=v-w$ satisfies \eqref{lift:eq}. Moreover
\begin{equation*}
\norm{u}_{H^1}\leq \norm{v}_{H^1}+\norm{w}_{H^1}< \norm{v}_{H^1}+2C\norm{\div v}_{L^2}\leq (1+2C\sqrt{n})\norm{v}_{H^1},
\end{equation*}
where $C$ as in Theorem~\ref{range_div:thm}. Taking the infimum over all $v$ such that $v=h$ on $\Gamma_D$ gives \eqref{lift:est}.
\end{proof}

\section{Proof of main result}\label{proof:sec}

We give here the proof of Theorem~\ref{main:thm}(iii), i.e. for the case that
\[
\int_{\Gamma_D}dS>0\quad \text{and}\quad \int_{\Gamma_N}dS>0.
\]
For the proof of Theorem~3.1(ii) we refer to Chapter IV of \cite{Boy13}. Note that in view of Remark~\ref{psi0:rem}, $\partial \Omega$ need not be of class $C^{1,1}$ as stated in \cite{Boy13}. For simplicity we take $\mu=1/2.$

Using the results proved in Sections~\ref{ineq:sec}--\ref{vector_analysis:sec}, notably Theorem~\ref{Korn3:thm} and Corollary~\ref{dRm2:thm}, the existence and uniqueness of weak solutions of \eqref{Stokes:bvp} follows from standard applications of the Lax--Milgram theorem. It is only for the reader's convenience that we provide all details. To this end, consider the symmetric bilinear form
\begin{equation*}
a(u,v)=\int_\Omega \ee(\nabla u):\ee(\nabla v)\,dx,
\end{equation*}
which satisfies
\begin{equation*}
\begin{aligned}
\abs{a(u,v)}& \leq \norm{u}_{H^1}\norm{v}_{H^1}\\
a(u,u)& \geq K^{-2}\norm{u}_{H^1}^2
\end{aligned}\quad
(u,v\in W(\Omega)),
\end{equation*}
where $K>0$ is the constant in the Korn inequality \eqref{Korn3:ineq}.
According to the Lax--Milgram theorem, for any $L\in W'(\Omega)$ there exists a unique element $u$ in $V(\Omega)$ such that
\begin{equation}\label{LM}
a(u,v)=\langle L,v\rangle \quad \forall v\in V(\Omega).
\end{equation}
Moreover, $u$ satisfies
\begin{equation}\label{u_std:est}
\norm{u}_{H^1}\leq K^2\norm{L}_{V'}.
\end{equation}
Let $A\colon W(\Omega)\to W'(\Omega)$ denote the bounded linear operator defined by
\[
\langle Au,v\rangle=a(u,v)\quad \forall v\in W(\Omega).
\]
Then \eqref{LM} is equivalent to
\[
Au=L\quad \text{in }V'(\Omega).
\]
Let $F$ in $V^\perp(\Omega)$ be defined by
\[
F=Au-L\quad \text{in }W'(\Omega).
\]
Set $p=B'F,$ where $B'$ is the pressure operator defined in Section~\ref{vector_analysis:sec}. Then
\begin{equation*}
\norm{p}_{L^2}\leq \norm{B'}\norm{F}_{W'}=C\norm{Au-L}_{W'},
\end{equation*}
where $C$ is the constant of Theorem~\ref{range_div:thm}. Since $u$ belongs to the null space of $Au-L,$ we have
\[
\norm{Au-L}_{W'}=\sup_{\substack{v\in W(\Omega)\\ \norm{v}=1}} \abs{\langle Au-L,v\rangle}=\sup_{\substack{v\in W(\Omega)\\ \norm{v}=1,\:a(u,v)=0}} \abs{\langle L,v\rangle}\leq \norm{L}_{W'}.
\]
Hence
\begin{equation}\label{p_std:est}
\norm{p}_{L^2}\leq C\norm{L}_{W'}.
\end{equation}

\begin{proof}[Proof of Theorem~\ref{main:thm}(iii)]
Using the linearity of the system (\ref{Stokes:bvp}) we shall construct $u$ and $p$ as superpositions in four steps. 

1.\enspace Assume (temporarily) that $g=0$ and $h=0.$ Then $u\in V(\Omega)$ and \eqref{Stokes:weak} can be written as
\begin{equation}\label{uf:weak}
\int_\Omega -p\,\div v+\ee(\nabla u):\nabla v+f\cdot v\,dx=0\quad \forall v\in W(\Omega).
\end{equation}
Clearly \eqref{uf:weak} follows from the abstract result following \eqref{LM} by taking
\begin{equation*}
\langle L,v\rangle=-\int_\Omega f\cdot v\,dx.
\end{equation*}
Thus
\begin{align}
\norm{u}_{H^1}& \leq K^2\norm{L}_{V'}\leq K^2\norm{f}_{L^2}\label{uf:est}\\
\norm{p}_{L^2}& \leq C\norm{L}_{W'}\leq C\norm{f}_{L^2}\label{pf:est}.
\end{align}

2.\enspace Assume $f=0$ and $h=0.$ Then $u\in V(\Omega)$ and \eqref{Stokes:weak} can be written as
\begin{equation}\label{ug:weak}
\int_\Omega -p\,\div v+\ee(\nabla u):\nabla v\,dx=\int_{\Gamma_N}g\cdot v\,dS\quad \forall v\in V(\Omega).
\end{equation}
As above, \eqref{ug:weak} follows from the abstract result by choosing
\begin{equation*}
\langle L,v\rangle=\int_{\Gamma_N}g\cdot v\,dS,
\end{equation*}
which also defines an element in $H^{-1/2}(\Gamma_N;\R^n).$ Using the representation \eqref{GN:dual} we obtain
\begin{equation}\label{Tg_var:eq}
\int_{\Gamma_N}g\cdot v\,dS
=\int_{\Omega} G:\nabla v+(\div G)\cdot v\,dx\quad \forall v\in W(\Omega),
\end{equation}
for some $G\in L^2(\Omega;\R^{n\times n})$ with $\div G\in L^2(\Omega;\R^n).$ In other words $g=G\hat{n}$ on $\Gamma_N.$ Thus we see that
\[
\norm{L}_{W'(\Omega)}\leq \norm{g}_{H^{-1/2}(\Gamma_N)}
\]
which implies
\begin{align}
\norm{u}_{H^1}& \leq K^2\norm{L}_{V'}\leq K^2\norm{g}_{H^{-1/2}}\label{ug:est}\\
\norm{p}_{L^2}& \leq C\norm{L}_{W'}\leq C\norm{g}_{H^{-1/2}}\label{pg:est}.
\end{align}

3.\enspace Assume $f=0$ and $g=0.$ Then \eqref{Stokes:weak} can be written as
\begin{equation}\label{uh:weak}
\int_\Omega -p\,\div v+\ee(\nabla u):\nabla v\,dx=0\quad \forall v\in W(\Omega).
\end{equation}
Since $u$ is now assumed to satisfy $u=h$ on $\Gamma_D,$ we proceed in a slightly different manner by seeking $u$ in the admissible class
\begin{equation*}
V_h=\left\{v\in H^1(\Omega;\R^n):\div v=0\text{ in }\Omega,\quad v=h\text{ on }\Gamma_D\right\}.
\end{equation*}
By Corollary~\ref{lift:cor},  $V_h$ is not empty. Since $V_h$ is a closed convex set, there exists a unique $u\in V_h$ such that 
\begin{equation}\label{uvp}
\int_\Omega \abs{\ee(\nabla u)}^2 dx\leq \int_\Omega \abs{\ee(\nabla v)}^2 dx\quad \forall v\in V_h.
\end{equation}
It is readily checked that \eqref{uvp} is equivalent to
\[
a(u,v)=0\quad \forall v\in V(\Omega).
\]
This implies \eqref{uh:weak} with $p=B'Au.$ By Korn's inequality and \eqref{uvp} we have
\begin{equation*}
 \norm{u-v}_{H^1}\leq K\norm{\ee(\nabla u)-\ee(\nabla v)}_{L^2}\leq 2K\norm{\ee(\nabla v)}_{L^2}\quad \forall v\in V_h.
\end{equation*}
Thus
\begin{equation*}
\norm{u}_{H^1}\leq \norm{u-v}_{H^1}+\norm{v}_{H^1}\leq (2K+1)\norm{v}_{H^1}\quad \forall v\in V_h.
\end{equation*}
Choosing $v$ as in Corollary~\ref{lift:cor} we deduce
\begin{equation}
\norm{u}_{H^1}\leq C'(2K+1)\norm{h}_{H^{1/2}}.
\end{equation}
The pressure satisfies
\begin{equation}\label{ph:est}
\norm{p}_{L^2}\leq C\norm{Au}_{W'}\leq CC'(2K+1)\norm{h}_{H^{1/2}}.
\end{equation}

4.\enspace Let $(u_f,p_f),$ $(u_g,p_g)$ and $(u_h,p_h)$ denote the solutions of \eqref{uf:weak}, \eqref{ug:weak} and \eqref{uh:weak} respectively. Then 
\begin{equation*}
u=u_f+u_g+u_h,\quad p=p_f+p_g+p_h
\end{equation*}
solve the weak formulation \eqref{Stokes:weak}. Moreover, the above estimates imply 
\begin{equation*}
\norm{u}_{H^1}+\norm{p}_{L^2}\leq C(\norm{f}_{L^2}+\norm{g}_{H^{-1/2}}+\norm{h}_{H^{1/2}}),
\end{equation*}
where the constant $C$ depends only on $\Omega$ and $\Gamma_D$
Hence $u$ and $p$ are uniquely determined.
\end{proof}

\section{Pressure-driven flow between two parallel plates}\label{app:sec}

\begin{figure}[htbp]
\begin{center}
\includegraphics[width=0.6\textwidth]{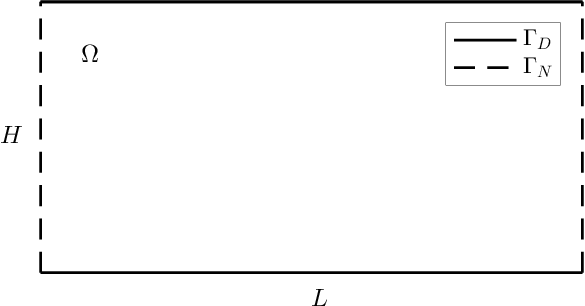}
\caption{Boundary of rectangular fluid domain $\Omega$}
\label{Omega:fig}
\end{center}
\end{figure}

We choose a simple application to illustrate Theorem~\ref{main:thm}(iii), namely the two-dimensional steady flow between two parallel plates driven by a ``pressure gradient''. There seems to be no general consensus as to the meaning of the term ``pressure-driven flow'' in the fluid dynamics literature. By pressure-driven flow, we mean the flow induced by a \emph{normal stress distribution} alone. For alternative meanings we refer to \cite{Con94} and \cite{Hey96}.

Let $\Omega\subset \R^2$ be the rectangular domain $(0,L)\times (0,H),$ i.e.
\begin{equation*}\label{Omega_rect:eq}
\Omega=\bigl\{ (x_1,x_2): 0<x_1<L,\quad 0<x_2<H\bigr\}
\end{equation*}
with
\begin{equation}\label{Gamma_rect:eq}
\Gamma_N=\Gamma_1\cup \Gamma_2,\quad 
\Gamma_D=\Gamma_3\cup \Gamma_4,
\end{equation}
where
\begin{align*}
\Gamma_1& =\bigl\{ (x_1,x_2): x_1=0,\quad 0<x_2<H\bigr\}\\
\Gamma_2& =\bigl\{ (x_1,x_2): x_1=L,\quad 0<x_2<H\bigr\}\\
\Gamma_3& =\bigl\{ (x_1,x_2): x_2=0,\quad 0\leq x_1\leq L\bigr\}\\
\Gamma_4& =\bigl\{ (x_1,x_2): x_2=H,\quad 0\leq x_1\leq L\bigr\}.
\end{align*}
See Figure~\ref{Omega:fig} for an illustration.

Assume $f=0,$ $h=0$ in \eqref{Stokes:bvp} and define
\begin{equation}\label{g_normal_stress:eq}
g(x)=
\begin{cases}
-p_{\mathrm{in}}\,\hat{n} & \text{if }x\in \Gamma_1\\
-p_{\mathrm{out}}\,\hat{n} & \text{if } x\in \Gamma_2,
\end{cases}
\end{equation}
where $p_{\mathrm{in}}$ (inlet pressure) and $p_{\mathrm{out}}$ (outlet presssure) are given constants. Thus, we have a normal stress condition on the lateral boundaries. According to Theorem~\ref{main:thm}(iii) the b.v.p. \eqref{Stokes:bvp} has a unique solution $u\in H^1(\Omega;\R^2),$ $p\in L^2(\Omega)$ such that
\begin{equation*}
\norm{u}_{H^1(\Omega)}+\norm{p}_{L^2(\Omega)}\leq C\norm{g}_{H^{-1/2}(\Gamma_N)}.
\end{equation*}
We compute $u$ and $p$ using the ``Creeping Flow'' module in Comsol Multiphysics, which is a software based on the finite element method, for
\begin{equation*}
L=2,\quad H=1,\quad p_{\mathrm{in}}=1,\quad p_{\mathrm{out}}=0,\quad \mu=1.
\end{equation*}
It is instructive to compare $(u,p)$ with the well-known Poiseuille solution $(\tilde{u},\tilde{p})$ (see e.g. Chapter 7 of \cite{Pan96}) defined by 
\begin{equation}\label{Poiseuille:eq}
\left \{
\begin{aligned}
\tilde{u}(x)& =-\frac{x_2(H-x_2)}{2\mu}\nabla \tilde{p}(x)\\
\tilde{p}(x)& =p_{\mathrm{out}}\,\frac{x_1}{L}+p_{\mathrm{in}}\,\biggl(1-\frac{x_1}{L}\biggr)
\end{aligned}\right.
\quad (x\in \Omega).
\end{equation}
Observe that $(\tilde{u},\tilde{p})$ also satisfies (\ref{Stokes:bvp}a,b,d) but not (\ref{Stokes:bvp}c) as
\begin{equation*}
\tilde{p}=
\begin{cases}
p_{\mathrm{in}} & \text{ on }\Gamma_1\\
p_{\mathrm{out}} & \text{ on }\Gamma_2,
\end{cases}
\end{equation*}
$\tilde{u}_2=0$ and
\begin{align*}
2\mu\,\ee(\nabla \tilde{u})& =\mu
\begin{bmatrix}
2\dfrac{\partial \tilde{u}_1}{\partial x_1} & \dfrac{\partial \tilde{u}_1}{\partial x_2}+\dfrac{\partial \tilde{u}_2}{\partial x_1}\\
\dfrac{\partial \tilde{u}_2}{\partial x_1}+\dfrac{\partial \tilde{u}_1}{\partial x_2} & 2\dfrac{\partial \tilde{u}_2}{\partial x_2}
\end{bmatrix}\\
& =\frac{(H-2x_2)(p_{\mathrm{in}}-p_{\mathrm{out}})}{2L}
\begin{bmatrix}
0 & 1\\
1 & 0
\end{bmatrix}.
\end{align*}
Thus $(u,p)$ and $(\tilde{u},\tilde{p})$ are not identical. Nevertheless the velocity profiles of $u$ and $\tilde{u}$ are similar (compare Figures~\ref{vel1:fig} and \ref{vel2:fig}) as are the boundary stresses (compare Figures~\ref{stress1:fig} and \ref{stress2:fig}). Note that, for the sake of visibility, velocity vectors are scaled by a factor $4.0$ while stress vectors are scaled by a factor $0.25.$ The pressure distribution $p$ behaves like the linear function $\tilde{p}=\tilde{p}(x_1)$ around $x_1=1$ but deviates more and more from $\tilde{p}$ as one approaches the lateral boundaries (see Figures~\ref{p1:fig} and \ref{p2:fig}). Note also that the viscous stresses on $\Gamma_N$ are $x_1$-directional in Figure~\ref{visc_stress1:fig} while they are $x_2$-directional in Figure~\ref{visc_stress2:fig}.

To summarize, our numerical example suggests that the Poiseuille solution $(\tilde{u},\tilde{p})$ defined by \eqref{Poiseuille:eq} \emph{approximates} the normal stress solution $(u,p).$ In fact, since
\begin{equation*}
2\mu\,\ee(\nabla\tilde{u})\hat{n}=O\biggl(\frac{H}{L}\biggr)
\end{equation*}
one may expect them to be asymptotically equivalent as $H/L\to 0,$ i.e. if $\Omega$ is ``infinitely long'' or ``infinitely thin''. A precise statement, when $H\to 0$ and $L$ is constant, follows.

\begin{thm}\label{asymptotic:thm}
Let $\Omega$ be the rectangle $(0,L)\times (0,H)$ with $\Gamma_N$ and $\Gamma_D$ as in \eqref{Gamma_rect:eq}. Let $(u,p)$ be the solution of \eqref{Stokes:bvp} with $f=0,$ $h=0$ and
\begin{equation*}
g(x)=-\tilde{p}(x)\hat{n},\quad \tilde{p}(x)=p_{\mathrm{out}}\,\frac{x_1}{L}+p_{\mathrm{in}}\,\biggl(1-\frac{x_1}{L}\biggr)\quad (x\in \Gamma_N).
\end{equation*}
Then
\begin{align*}
\lim_{H\to 0} \frac{1}{\abs{\Omega}}\int_{\Omega} \frac{L}{H^2}\,u(x)\phi\Bigl(\frac{x_1}{L},\frac{x_2}{H}\Bigr) dx& = \int_{\square} u^0(y)\phi(y)\,dy\\
\lim_{H\to 0} \frac{1}{\abs{\Omega}}\int_{\Omega} p(x)\phi\Bigl(\frac{x_1}{L},\frac{x_2}{H}\Bigr) dx& =\int_{\square} p^0(y)\phi(y)\,dy
\end{align*}
for all $\phi\in C(\R^2),$ where $\square$ is the unit square $(0,1)\times (0,1)$ and
\begin{equation*}
\left\{
\begin{aligned}
u^0(y)& =-\frac{y_2(1-y_2)}{2\mu}\nabla p^0(y)\\
p^0(y)& =p_{\mathrm{out}}\,y_1+p_{\mathrm{in}}\,(1-y_1)
\end{aligned}\right.
\qquad (y\in \square).
\end{equation*}
\end{thm}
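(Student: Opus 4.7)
My plan is to recast the problem on the fixed reference domain $\square$ via an anisotropic change of variables adapted to lubrication scaling, derive uniform bounds in the aspect ratio $\eps = H/L$, extract weak limits by compactness, and identify them as $(u^0,p^0)$ by passing to the limit in the weak formulation.

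To begin, set $y_1=x_1/L$, $y_2=x_2/H$ and define
\[
U_1^\eps(y)=\frac{L}{H^2}\,u_1(Ly_1,Hy_2),\quad U_2^\eps(y)=\frac{L^2}{H^3}\,u_2(Ly_1,Hy_2),\quad P^\eps(y)=p(Ly_1,Hy_2).
\]
A direct computation gives $\div_y U^\eps=0$ in $\square$ and shows that, after substitution in \eqref{Stokes:weak}, every horizontal-derivative term carries a factor $\eps^2$, so that the formal $\eps\to 0$ limit yields the Reynolds lubrication system
\[
\mu\,\partial_{y_2}^2 U_1^0=\partial_{y_1}P^0,\quad \partial_{y_2}P^0=0,\quad \partial_{y_1}U_1^0+\partial_{y_2}U_2^0=0,
\]
with $U^0=0$ on $\{y_2\in\{0,1\}\}$ and $P^0=p_{\mathrm{in}}$ at $y_1=0$, $P^0=p_{\mathrm{out}}$ at $y_1=1$ inherited from the normal-stress condition on $\Gamma_N$ (the viscous part of the traction being of order $\eps^2$). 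A direct check shows that the unique solution is $(U^0,P^0)=(u^0,p^0)$, with $U_2^0\equiv 0$.

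The second step is to secure a priori bounds independent of $\eps$. I would write $(u,p)=(\tilde u,\tilde p)+(w,q)$ and work with the residual $(w,q)$, which solves \eqref{Stokes:bvp} with $f=0$, $h=0$ and traction $g_w=-2\mu\,\ee(\nabla\tilde u)\hat n$ on $\Gamma_N$. Since $|2\mu\,\ee(\nabla\tilde u)|=O(H/L)$ pointwise, this residual data is small, and testing the weak formulation with $v=w$ while exploiting the explicit product geometry (so that Korn's inequality from Theorem~\ref{Korn3:thm} and the right inverse from Theorem~\ref{range_div:thm} can be made dimensionally explicit in $\eps$) yields
\[
\|U^\eps\|_{L^2(\square)}+\|\partial_{y_2}U_1^\eps\|_{L^2(\square)}+\|P^\eps\|_{L^2(\square)}\leq C
\]
uniformly in $\eps$. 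Weak compactness then provides a subsequence along which $(U^\eps,P^\eps)\rightharpoonup(U^0,P^0)$ in $L^2(\square;\R^2)\times L^2(\square)$.

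Passing to the limit in the rescaled weak formulation, all $\eps^2$ terms drop out and one recovers exactly the Reynolds system above. Since its solution is unique, the entire sequence converges (not merely a subsequence), and since weak $L^2(\square)$ convergence implies convergence tested against any $\phi\in C(\overline{\square})$, undoing the change of variables yields the two claimed identities. The main obstacle will be the uniform-in-$\eps$ bound on $P^\eps$: the Bogovski\u{\i} constant in Theorem~\ref{range_div:thm} a priori degenerates as $H\to 0$, and the workaround is to construct an explicit right inverse of the divergence on the rectangle (for instance by integrating in $y_2$ and correcting by a horizontal component supported away from $\Gamma_D$) whose norm is controlled independently of $\eps$ in the anisotropic norms that arise after rescaling.
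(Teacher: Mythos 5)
The paper does not actually prove Theorem~\ref{asymptotic:thm}: the author explicitly defers the proof to a forthcoming paper based on two-scale convergence for thin domains, so there is no argument in the text to compare yours against. On its own terms, your strategy --- lubrication rescaling to the unit square, comparison with the Poiseuille pair $(\tilde u,\tilde p)$, energy estimate for the residual, Bogovski\u{\i}-type bound for the residual pressure --- is the standard and, I believe, correct route. One simplification you are leaving on the table: after your rescaling the Poiseuille solution becomes \emph{exactly} $(u^0,p^0)$, i.e. $\frac{L}{H^2}\tilde u_1(Ly_1,Hy_2)=u_1^0(y)$ and $\tilde p(Ly_1)=p^0(y_1)$. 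Hence the whole theorem reduces to showing $\frac{L}{H^2}\norm{u-\tilde u}_{L^2(\Omega)}=o(\abs{\Omega}^{1/2})$ and $\norm{p-\tilde p}_{L^2(\Omega)}=o(\abs{\Omega}^{1/2})$; no compactness, no passage to the limit in the weak formulation, and no identification of the Reynolds system are needed. Your first paragraph (formal derivation of the limit system) can then be deleted entirely.

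The genuine gaps are the uniform constants, and your proposal asserts rather than establishes them. First, testing with $v=w$ gives $2\mu\norm{\ee(\nabla w)}_{L^2}^2=\int_{\Gamma_N}g_w\cdot w\,dS$ and hence bounds on the velocity only after you control (a) the trace of $w$ on $\Gamma_N$ with the correct $H$-dependence (an anisotropic trace inequality, using that $w$ vanishes on $\Gamma_3\cup\Gamma_4$ and Poincar\'e in the $x_2$-direction), and (b) the Korn constant of Theorem~\ref{Korn3:thm} on $(0,L)\times(0,H)$ \emph{uniformly in $H$} --- the theorem as stated gives a constant depending on $\Omega$ and $\Gamma_D$, which a priori blows up as $H\to 0$, and for thin domains with partial Dirichlet data this is exactly the kind of constant that can degenerate; it must be proved (here it can be, by the integration-by-parts identity for $\int\nabla w:\nabla w^T$ exploiting that $w=0$ on the two long faces). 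Second, the pressure bound does not come from testing with $v=w$ at all, contrary to the sentence claiming the energy test ``yields'' $\norm{P^\eps}_{L^2(\square)}\le C$; it requires a right inverse of $\div$ mapping into $W$ with explicitly controlled norm. You correctly flag this as the main obstacle but only gesture at a construction. Note that a Bogovski\u{\i} constant of order $L/H$ suffices: the energy estimate gives $\norm{\ee(\nabla w)}_{L^2(\Omega)}\le C H^2/L$, so $\norm{q}_{L^2(\Omega)}\le C_B\cdot CH^2/L\sim CH=o((LH)^{1/2})$, which is all the theorem needs. Until the uniform Korn inequality and the $O(L/H)$ divergence inverse are actually constructed on the rectangle, the argument is an outline rather than a proof.
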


We shall not prove Theorem~\ref{asymptotic:thm} here as a more general result, connected with lubrication theory, will appear in a forthcoming paper. The notion of convergence used in Theorem~\ref{asymptotic:thm} is called ``two-scale convergence for thin domains''. It was introduced by Maru\v{s}i\'{c} and Maru\v{s}i\'{c}-Paloka in \cite{MarPal00}.

%

\begin{figure}
\centering
\includegraphics[width=0.6\textwidth]{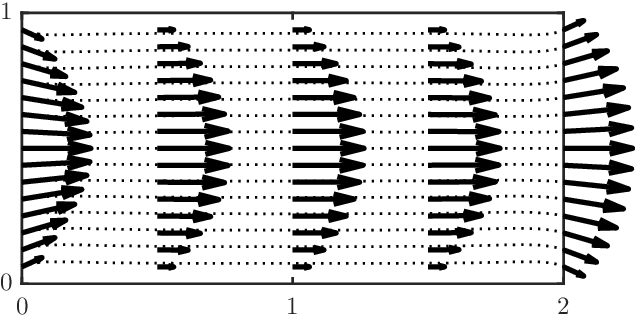}
\caption{Velocity $u$ (dotted streamlines)}
\label{vel1:fig}
\end{figure}

\begin{figure}
\centering
\includegraphics[width=0.6\textwidth]{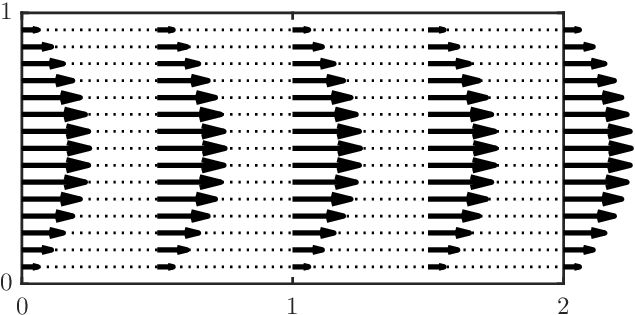} 
\caption{Velocity $\tilde{u}$ (dotted streamlines)}
\label{vel2:fig}
\end{figure}

\begin{figure}
\centering
\includegraphics[width=0.6\textwidth]{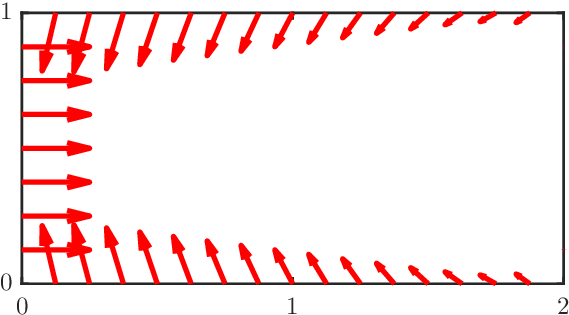} 
\caption{Total stress $\left(-p\, I+2\mu\,\ee(\nabla u)\right)\hat{n}$}
\label{stress1:fig}
\end{figure}

\begin{figure}
\centering
\includegraphics[width=0.6\textwidth]{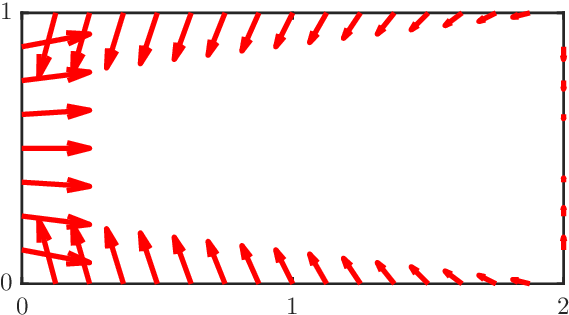} 
\caption{Total stress $\left(-\tilde{p}\, I+2\mu\,\ee(\nabla \tilde{u})\right)\hat{n}$}
\label{stress2:fig}
\end{figure}

\begin{figure}
\centering
\includegraphics[width=0.6\textwidth]{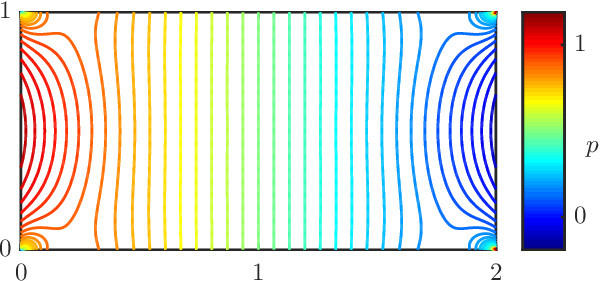} 
\caption{Pressure $p$}
\label{p1:fig}
\end{figure}

\begin{figure}
\centering
\includegraphics[width=0.6\textwidth]{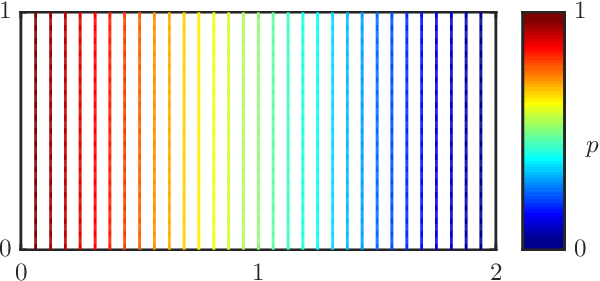} 
\caption{Pressure $\tilde{p}$}
\label{p2:fig}
\end{figure}

\begin{figure}
\centering
\includegraphics[width=0.6\textwidth]{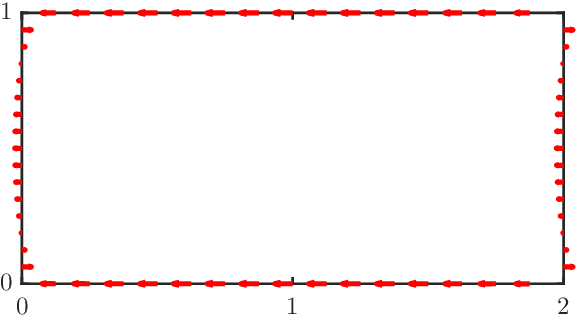}
\caption{Viscous stress $2\mu\,\ee(\nabla u)\hat{n}$}
\label{visc_stress1:fig}
\end{figure}

\begin{figure}
\centering
\includegraphics[width=0.6\textwidth]{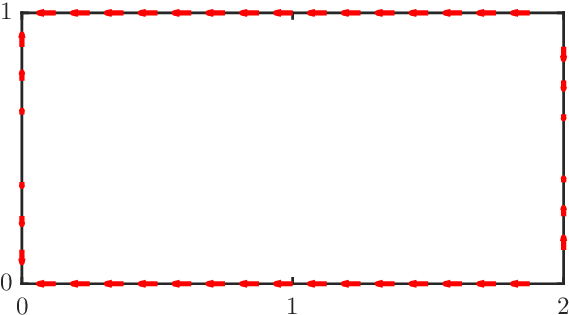}
\caption{Viscous stress $2\mu\,\ee(\nabla \tilde{u})\hat{n}$}
\label{visc_stress2:fig}
\end{figure}

\end{document}